\newtheorem{theorem}{Theorem}
\newtheorem{corollary}[theorem]{Corollary}
\newtheorem{lemma}[theorem]{Lemma}
\newtheorem{proposition}[theorem]{Proposition}
\DeclareMathOperator{\Aut}{Aut}
\numberwithin{theorem}{section}
\theoremstyle{definition}
\newtheorem{definition}[theorem]{Definition}
\newcommand{\grf}{\varphi}
\newcommand{\lra}{\longrightarrow}
\newcommand{\grs}{\sigma}
\newcommand{\gra}{\alpha}
\begin{document}

\title{Abelian varieties as automorphism groups of smooth projective varieties}
\author{Davide Lombardo and Andrea Maffei}

\date{}
\maketitle

\begin{abstract}
We determine which complex abelian varieties can be realized as the automorphism group of a smooth projective variety.
\end{abstract}
\section{Introduction}

In this note we determine which complex abelian varieties $A$ can be realized as the automorphism group of a complex smooth projective variety. 
{Given an abelian variety $A$, we denote by $\operatorname{Aut}_0(A)$ (respectively $\operatorname{Aut}(A)$) 
the automorphism group of $A$ as an algebraic group (respectively as a projective variety).}
We prove that if {$\operatorname{Aut}_0(A)$} is infinite then $A$ can never be realized 
{as the automorphism group of a smooth projective variety} (Theorem \ref{thm:InfiniteOrder}), while if {$\operatorname{Aut}_0(A)$} is finite there exists a smooth projective 
variety $Y$ of dimension $2+\dim A$ such that $\Aut(Y)=A$ (Theorem \ref{thm:AutoY}).

\medskip

\textbf{Acknowledgments.} This work was motivated by a more general question posed by Michel Brion in Oberwolfach; the specific case of abelian varieties, which we analyse in this paper, was raised by Corrado De Concini. 
We would like to thank Angelo Vistoli for some very useful correspondence regarding the comparison between étale and analytic cohomology.

\section{Abelian varieties with infinite automorphism group}
In this section we show that {no abelian variety with infinite 
$\operatorname{Aut}_0(A)$ can be realized as the automorphism group of a smooth projective variety:}
\begin{theorem}\label{thm:InfiniteOrder}
Let $A$ be an abelian variety such that $\operatorname{Aut}_0(A)$, the automorphism group of $A$ as an algebraic group, is infinite. 
Let $X$ be a smooth projective variety on which $A$ acts faithfully: then the automorphism group of $X$ is strictly larger than $A$.
%such that there is an injective group homomorphism $\iota : A \to \operatorname{Aut}(X)$: then $\iota$ is not an isomorphism.
\end{theorem}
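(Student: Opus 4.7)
I would argue by contradiction, assuming $\operatorname{Aut}(X) = A$. Since $A$ is connected, the inclusions $A \subseteq \operatorname{Aut}_0(X) \subseteq \operatorname{Aut}(X) = A$ force $\operatorname{Aut}_0(X) = A$. The target is then to produce, for some non-identity $\sigma \in \operatorname{Aut}_0(A)$, an automorphism $\phi \in \operatorname{Aut}(X)$ satisfying $\phi \circ t_a = t_{\sigma(a)} \circ \phi$ for all $a \in A$; since $A$ is abelian and inner conjugation on $A$ is trivial, any such $\phi$ automatically lies outside $A$, yielding the contradiction.

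The first step is to analyse the orbit structure of the $A$-action on $X$. The identity component of any stabiliser is an abelian subvariety of $A$ fixing a point $x$ and acting linearly on $T_x X$; since an abelian variety admits no non-trivial linear representation and an automorphism of a compact complex manifold with trivial $1$-jet at a fixed point is the identity, this component acts trivially on all of $X$, hence is trivial by faithfulness. A semicontinuity plus faithfulness argument then upgrades this to the statement that $A$ acts freely on a dense open $U \subseteq X$, making $\pi\colon U \to V := U/A$ a principal $A$-bundle over a smooth base.

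Next, the existence of a $\phi$ as above is governed by the isomorphism class $\xi := [\pi] \in H^1(V, \underline{A})$ of this bundle: a lift of $\sigma$ to a bundle automorphism of $U$ (relative to the $\sigma$-twisted action) exists precisely when $\sigma$ fixes $\xi$ under the natural action of $\operatorname{Aut}_0(A)$ on $H^1(V, \underline{A})$. The heart of the proof, and the main obstacle, is to show that this stabiliser is infinite. I would attempt this by exploiting the rigid structure of $H^1(V, \underline{A})$ (via the exponential sequence and the comparison between \'etale and analytic cohomology hinted at in the acknowledgments) to force the $\operatorname{Aut}_0(A)$-orbit of $\xi$ to be finite, so that its stabiliser has finite index in the infinite group $\operatorname{Aut}_0(A)$. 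Finally, a bundle automorphism of $U$ produced this way must extend uniquely to an automorphism of $X$ by properness and normality of $X$ and codimension of $X \setminus U$, closing the contradiction. The orbit-finiteness step and the extension across the unstable locus $X \setminus U$ are the genuinely delicate points.
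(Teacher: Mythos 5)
Your reduction to a twisted-automorphism problem is reasonable in outline, but the two steps you yourself flag as ``delicate'' are genuine gaps, and they are exactly where the content of the theorem lies. First, the finiteness of the $\operatorname{Aut}_0(A)$-orbit of the class $\xi=[\pi]\in H^1(V,\underline{A})$ is not something you can extract from general ``rigidity'' of that cohomology group: analytically $H^1(V,\underline{A})$ sits in an exact sequence involving $H^1(V,\mathcal{O}_V)$-type divisible pieces and $H^2$ of the period lattice, and a class with nontrivial continuous part can perfectly well have infinite orbit under an infinite group of lattice automorphisms. What you actually need is that $\xi$ is torsion, i.e.\ that it comes from $H^1(V,A[n])$ for some $n$ (a finite group, on which the orbit is trivially finite). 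That isotriviality statement is precisely the nontrivial input: the paper imports it as Brion's structure theorem, $X\cong Y\times^{A[n]}A$ for some $A[n]$-stable closed subscheme $Y$, and it is not a formal consequence of the setup. Without it your finite-index-stabiliser argument has no starting point.

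Second, the extension step fails as stated. An automorphism of an open subset $U$ of a smooth projective $X$ whose complement has codimension $\geq 2$ extends, by normality, only to a birational self-map of $X$, not in general to a biregular automorphism; properness of $X$ does not repair this (think of flops or of blow-ups, where the extended map contracts or is undefined on part of the boundary). Moreover the codimension hypothesis itself is unjustified: the non-free locus is a union of fixed loci of nontrivial torsion elements of $A$, and such fixed loci can be divisors. By contrast, the paper never needs to pass to an open subset or extend anything: Brion's theorem presents all of $X$ as $(Y\times A)/A[n]$, one picks a nontrivial $\varphi\in\operatorname{Aut}_0(A)$ fixing $A[n]$ pointwise (possible because the kernel of $\operatorname{Aut}_0(A)\to\operatorname{Aut}(A[n])$ has finite index in an infinite group), and checks in one line that $(y,a)\mapsto(y,\varphi(a))$ descends to an automorphism of $X$ not lying in the image of $A$. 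If you want to salvage your approach, you should either prove the torsion nature of $\xi$ directly (at which point you are essentially reproving Brion's result) or simply invoke it, and then replace the open-locus/extension argument by a global equivariant description of $X$.
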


The proof relies on the following result, due to Brion \cite[page 2]{MR2562620}:
\begin{theorem}\label{thm:Brion}
Let $X$ be a smooth projective variety on which an abelian variety $A$ acts faithfully. 
There is a positive integer $n$ and a $A[n]$-invariant closed subscheme $Y$ of $X$ such that there is an $A$-equivariant isomorphism
%There exists a positive integer $n$ and a $A$-equivariant map $\psi : X \to A$ (where the action of $A$ on itself is $\begin{array}{ccc} A \times A & \to & A \\ (a,a') & \mapsto & na+a' \end{array}$) such that the following holds: setting $Y = \psi^{-1}(0_A)$ we have a $A$-equivariant isomorphism
\[
X \cong Y \times^{A[n]} A.
\]
\end{theorem}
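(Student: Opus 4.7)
The plan is to reduce the $A$-action to a free one via a finite quotient, and then recognise the resulting torsor as coming from a finite subgroup scheme through a Kummer-type argument. First, I would establish that every stabilizer $A_x$ is finite and, in fact, uniformly contained in $A[n]$ for a single integer $n$. If some connected positive-dimensional subgroup $B \subseteq A$ fixed a point $x$, then $B$ would have to act on $T_xX$ by a trivial representation (as $B$ is proper while $GL(T_xX)$ is affine); in characteristic zero this forces $B$ to act trivially on the formal neighbourhood of $x$, and hence on the component of $X$ containing $x$, contradicting faithfulness. Thus all $A_x$ are finite, and upper semi-continuity of the stabilizer order together with the noetherianity of $X$ yields a uniform bound, giving $A_x \subseteq A[n]$ for some $n$.

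Second, I would pass to the intermediate quotient $X' := X/A[n]$. Since $A_x \subseteq A[n]$ for every $x$, the induced action of $\bar A := A/A[n]$ on $X'$ is scheme-theoretically free, so $\pi' : X' \to S := X/A$ is an étale $\bar A$-torsor, classified by a class $\xi \in H^1_{\mathrm{et}}(S, \bar A)$. The key technical step is to show that $\xi$ is torsion. For this I would use the Albanese morphism $X' \to \mathrm{Alb}(X')$: the $\bar A$-action on $X'$ induces a homomorphism $\phi : \bar A \to \mathrm{Alb}(X')$ with finite kernel (exploiting the freeness of the $\bar A$-action together with the fact that closed abelian subvarieties of a smooth projective variety inject into its Albanese), and a standard diagram chase shows that $\pi'$ is obtained by pullback from the quotient map $\mathrm{Alb}(X') \to \mathrm{Alb}(X')/\phi(\bar A)$. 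The latter is classified by an extension of abelian varieties $0 \to \phi(\bar A) \to \mathrm{Alb}(X') \to \mathrm{Alb}(X')/\phi(\bar A) \to 0$, and by Poincaré complete reducibility every such extension splits up to isogeny, so its class (and hence $\xi$) is killed by some integer $m$.

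Finally, the Kummer short exact sequence $0 \to \bar A[m] \to \bar A \xrightarrow{[m]} \bar A \to 0$ of étale sheaves on $S$ lifts $\xi$ to a class in $H^1_{\mathrm{et}}(S, \bar A[m])$, corresponding to an étale $\bar A[m]$-torsor $T \to S$ such that $T \times_S X'$ is a trivial $\bar A$-torsor. A trivialising section of $X' \to S$ over $T$ then pulls back, through the $A[n]$-quotient $X \to X'$, to a closed subscheme $Y \subseteq X$ invariant under the preimage of $\bar A[m]$ in $A$, which is $A[N]$ for $N := nm$; the orbit map $(y, a) \mapsto a \cdot y$ gives the required $A$-equivariant isomorphism $Y \times^{A[N]} A \xrightarrow{\sim} X$. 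The main obstacle is the torsion claim, which is where the smoothness and projectivity of $X$ are genuinely used, via the Albanese and Poincaré reducibility; all the remaining steps are formal consequences of this.
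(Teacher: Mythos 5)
The paper does not actually prove this statement: it is quoted from Brion (the reference \cite{MR2562620}), so the only meaningful comparison is with Brion's own argument. Your overall strategy --- kill the finite stabilizers by passing to $X' = X/A[n]$, view the result as a torsor under $\bar A = A/A[n]$ over $S = X/A$, prove the torsor class $\xi$ is torsion, and descend to a finite torsor via the Kummer sequence --- is a reasonable skeleton, and the first and last steps are essentially sound. The problem sits exactly at the step you yourself flag as the crux, namely the torsion claim, and specifically at the assertion that $\phi : \bar A \to \operatorname{Alb}(X')$ has finite kernel. You justify this by ``the fact that closed abelian subvarieties of a smooth projective variety inject into its Albanese'', but that statement is false: an elliptic K3 surface contains smooth elliptic curves as closed subvarieties while its Albanese is trivial, so $H_1$ of the curve dies completely in $H_1$ of the surface. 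Nothing else in your outline substitutes for this, and without it $\phi$ could a priori be constant, in which case the diagram chase through $\operatorname{Alb}(X') \to \operatorname{Alb}(X')/\phi(\bar A)$ and the Poincar\'e-reducibility step yield no information whatsoever about $\xi$.

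The finite-kernel claim is in fact true in your situation, but establishing it requires the one genuinely non-formal input your sketch omits, and it is precisely the key idea of Brion's proof: fix an ample line bundle $L$ on $X'$; since $\bar A$ is connected it acts trivially on $\operatorname{NS}(X')$, so $b \mapsto t_b^* L \otimes L^{-1}$ defines a homomorphism $\bar A \to \operatorname{Pic}^0(X')$; restricting to a closed orbit $\bar A \cdot x \cong \bar A$, the composite $\bar A \to \operatorname{Pic}^0(X') \to \operatorname{Pic}^0(\bar A \cdot x)$ is the polarization isogeny attached to the ample bundle $L|_{\bar A \cdot x}$, so the restriction map $\operatorname{Pic}^0(X') \to \operatorname{Pic}^0(\bar A\cdot x)$ is surjective and, dually, $\bar A = \operatorname{Alb}(\bar A\cdot x) \to \operatorname{Alb}(X')$ has finite kernel. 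This is exactly where projectivity enters (non-torsion classes in $H^1(S,\bar A)$ do exist and define analytic torsors, just not projective ones), so the gap is located at the load-bearing point rather than at a technicality. A secondary issue: you use the quotient $S = X/A$ (and $X'/\bar A$) as a scheme without comment; its existence in a form suitable for your cohomological argument is not automatic and is itself part of what Brion establishes.
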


\begin{proof}(of Theorem \ref{thm:InfiniteOrder})
Let $\iota : A \hookrightarrow \operatorname{Aut}(X)$ be the given action of $A$ on $X$ and write $X \cong Y \times^{A[n]} A$ as in Theorem \ref{thm:Brion}. 
We can represent $X \cong Y \times^{A[n]} A$ more explicitly as the quotient
\[
X \cong \frac{Y \times A}{A[n]},
\]
where $t \in A[n]$ acts on $(y,a)$ as $t \cdot (y,a) = (\iota(t)(y),a-t)$. This quotient is well-behaved, because $A[n]$ is a finite group acting on $Y \times A$ with no fixed points. In particular, in order to give an (invertible) map $X \to X$ it is enough to give an (invertible) map $Y \times A \to Y \times A$ that is compatible with the action of $A[n]$. 
Notice that, since $A[n]$ is finite and stable under the action of $\Aut_0(A)$,
there exists a nontrivial automorphism $\varphi\in \Aut_0(A) $ that acts trivially on $A[n]$.
We claim that the automorphism $\psi$ of $Y \times A$ given by $(y,a) \mapsto (y,\varphi(a))$ 
descends to an automorphism $\overline{\psi}$ of $X$; since $\overline{\psi}$ is not in the 
image of $\iota$, this proves that $\operatorname{Aut}(X)$ is strictly larger than $\iota(A)$.
To see that $\overline{\psi}$ descends to $X$, it suffices to check that for every $t \in A[n]$ we have 
$\psi(t \cdot (y,a)) = t \cdot \psi((y,a))$, that is,
\[
\psi((\iota(t)y,a-t)) = t \cdot (y,\varphi(a)) \Longleftrightarrow (\iota(t) (y),\varphi(a-t)) = (\iota(t) (y),\varphi(a)-t);
\]
this last equality holds since $\varphi$ is a group homomorphism and
$t$ is in $A[n]$, which $\varphi$ fixes pointwise.
%In order to give a map $X \to X$ it is thus sufficient to give a map $Y \times A \to Y \times A$ that is compatible with the equivalence relation defining $X$. 
%Let us make this more explicit: we have $(a,y) \sim (a',y')$ if and only if $a\cdot y = a' \cdot y'$, if and only if $y = (a'-a) \cdot y'$. Applying $\chi$ to this equality, we obtain $0_A=\chi(y) = \chi((a'-a) \cdot y') = n(a'-a) + \chi(y')=n(a'-a)$, so $a'-a \in A[n]$. Now take an automorphism $\varphi$ of $A$ that is not a translation but fixes $A[n]$ pointwise; notice that some positive power $m$ of $\beta$ fixes $A[n]$ pointwise, so we can take $\varphi=\beta^m$. We claim that the map $A \times Y \to A \times Y$ given by $(a,y) \mapsto (\varphi(a),y)$ descends to $X$, that is, it is compatible with the equivalence relation.
%Then if two points $(a,y)$ and $(a',y')$ are in the same equivalence class (i.e. $a \cdot y=a' \cdot y'$) we want to check that $(\varphi(a),y)$ and $(\varphi(a'),y')$ are also in the same equivalence class, that is,
%\[
%\varphi(a) \cdot y = \varphi(a') \cdot y' \Leftrightarrow y = (\varphi(a')-\varphi(a)) \cdot y' \Leftrightarrow y = (\varphi(a'-a)) \cdot y',
%\]
%and this is true since $a'-a$ is an $n$-torsion point and $\varphi$ fixes $A[n]$ pointwise, so that the last equality is equivalent to 
%\[
%y = (a'-a) \cdot y' \Leftrightarrow a \cdot y = a' \cdot y'.
%\]
\end{proof}

\section{Abelian varieties with finite automorphism group}

We will now prove that any abelian variety such that $\operatorname{Aut}_0(A)$ is finite can be realized as the automorphism group of a smooth projective variety $Y$. We first make some remaks on the structure of abelian varieties with finite automorphism group.

\begin{lemma}\label{lemma:Autfinito}
Let $A$ and $B$ two isogenous abelian varieties then $\Aut_0(A)$ is finite if and only if $\Aut_0(B)$ is finite.
\end{lemma}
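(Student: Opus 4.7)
The plan is to identify each $\Aut_0(\cdot)$ with the unit group of an order in a $\mathbb{Q}$-algebra and then to show that finiteness of such a unit group depends only on the algebra, not on the chosen order.

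First, I would recall that $\Aut_0(A) = \operatorname{End}(A)^\times$, where $\operatorname{End}(A)$ denotes the ring of endomorphisms of $A$ as an algebraic group; this is a finitely generated free $\mathbb{Z}$-module, and the rational algebra $\operatorname{End}^0(A) := \operatorname{End}(A) \otimes_{\mathbb{Z}} \mathbb{Q}$ is a finite-dimensional semisimple $\mathbb{Q}$-algebra (Poincaré reducibility). The same holds with $A$ replaced by $B$.

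Next, given an isogeny $f : A \to B$, I would pick an isogeny $g : B \to A$ with $g \circ f = [n]_A$ and define $F(\alpha) := \tfrac{1}{n}\, f \circ \alpha \circ g$; a direct check shows that $F$ is a $\mathbb{Q}$-algebra isomorphism $\operatorname{End}^0(A) \xrightarrow{\sim} \operatorname{End}^0(B)$. Via $F$, the subrings $\operatorname{End}(A)$ and $\operatorname{End}(B)$ become two orders $\mathcal{O}_A, \mathcal{O}_B$ in a common finite-dimensional semisimple $\mathbb{Q}$-algebra $D$, and the goal reduces to showing that $\mathcal{O}_A^\times$ is finite iff $\mathcal{O}_B^\times$ is finite.

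Finally, I would prove the general statement that any two orders $\mathcal{O}_1, \mathcal{O}_2 \subset D$ have commensurable unit groups. Since $\mathcal{O}_1 \cap \mathcal{O}_2$ contains $1$ and is a full-rank sublattice of both, it is again an order, so it suffices to treat an inclusion $\mathcal{O} \subseteq \mathcal{O}'$ of orders of finite index. The two-sided conductor $\mathfrak{f} := \{x \in \mathcal{O}' : x \mathcal{O}' \subseteq \mathcal{O}\}$ is a two-sided ideal of both $\mathcal{O}$ and $\mathcal{O}'$ with $\mathcal{O}'/\mathfrak{f}$ finite (it contains some multiple $n \mathcal{O}'$), and the principal units $1 + \mathfrak{f}$ lie inside $\mathcal{O}^\times$. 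Since $\mathcal{O}'^\times / (1+\mathfrak{f})$ embeds into the finite group $(\mathcal{O}'/\mathfrak{f})^\times$, the intersection $\mathcal{O}^\times \cap \mathcal{O}'^\times = (\mathcal{O}\cap\mathcal{O}')^\times$ has finite index in $\mathcal{O}'^\times$, giving the required commensurability.

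The only real difficulty is the conductor argument in the last step, where one has to check that an inverse of $1+x$ (with $x \in \mathfrak{f}$) actually lies in $1 + \mathfrak{f}$, via the identity $(1+x)^{-1} - 1 = -x(1+x)^{-1} \in \mathfrak{f} \cdot \mathcal{O}' = \mathfrak{f}$; the rest is essentially bookkeeping with isogenies.
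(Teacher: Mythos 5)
Your proof is correct in substance, but it takes a genuinely different route from the paper's. The paper stays entirely at the level of the abelian varieties: writing $B \cong A/H$ with $H \subseteq A[n]$, it notes that $\Aut_0(A)$ acts on the finite group $A[n]$, so if $\Aut_0(A)$ is infinite then infinitely many automorphisms of $A$ fix $A[n]$ pointwise; these preserve $H$, hence descend injectively to automorphisms of $B$, and the symmetry of the isogeny relation finishes the proof --- essentially a two-line pigeonhole argument using no structure theory of $\operatorname{End}(A)$. You instead pass to endomorphism rings, use that $\operatorname{End}^0$ is an isogeny invariant, and prove the stronger statement that unit groups of any two orders in a finite-dimensional $\mathbb{Q}$-algebra are commensurable, so in fact $\Aut_0(A)$ and $\Aut_0(B)$ are commensurable rather than merely simultaneously finite; the price is the extra machinery (finite generation of $\operatorname{End}(A)$, the conductor argument). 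One small repair is needed in your last step: since $\operatorname{End}(A)$ may be noncommutative, your one-sided conductor $\mathfrak{f}=\{x\in\mathcal{O}' : x\mathcal{O}'\subseteq\mathcal{O}\}$ is a two-sided ideal of $\mathcal{O}$ and a right ideal of $\mathcal{O}'$, but not in general a left ideal of $\mathcal{O}'$, so $\mathcal{O}'/\mathfrak{f}$ need not be a ring and the embedding of $\mathcal{O}'^\times/(1+\mathfrak{f})$ into its unit group does not literally make sense. Either use the two-sided conductor $\{x : \mathcal{O}'x\mathcal{O}'\subseteq\mathcal{O}\}$ (which still contains $m\mathcal{O}'$), or let $\mathcal{O}'^\times$ act by right multiplication on the finite set $\mathcal{O}'/\mathfrak{f}$ and take the stabilizer of the coset $1+\mathfrak{f}$, which is exactly $\mathcal{O}'^\times\cap(1+\mathfrak{f})$ and hence of finite index; your verification that $(1+x)^{-1}\in 1+\mathfrak{f}$ then shows this finite-index subgroup lies in $\mathcal{O}^\times$, completing the commensurability claim.
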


\begin{proof}
Since being isogenous is a symmetric relation, it suffices to prove that if $A \to B$ is an isogeny and $\operatorname{Aut}_0(A)$ is infinite, then so is $\operatorname{Aut}_0(B)$. 
Write $B \cong A/H$, where $H$ is a finite subgroup of $A$, and assume that $\Aut_0(A)$ is infinite. 
Notice that every automorphism $\grf$ of $A$ which leaves $H$ stable induces an automorphism $\bar \grf$ of $B$, and that 
$\bar\grf$ is trivial if and only if $\grf$ is trivial. 
Let $n$ be the order of $H$; in particular, we have $H\subset A[n]$. 
Any automorphism $\grf$ of $A$ leaves $A[n]$ stable, so, since $\Aut_0(A)$ is infinite 
and $A[n]$ is finite, the subgroup of automorphisms $\grf$ which fix $A[n]$ pointwise is infinite. 
Every such automorphism leaves $H$ stable, hence it descends to an automorphism of $B$, and since the map $\grf \mapsto \bar{\grf}$ is injective we deduce that $\Aut_0(B)$ is infinite.
%Assume now that $\Aut_0(B)$ is infinite. As before $H\subset A[n]$ where $n$ is the order of $H$ and $K=A[n]/H$ is a finite subgroup of $B$.
%Let $C=B/K$, then by the previous point the automorphism group of $C$ is infinite. Finally notice that $C=A/A[n]$ and that the multiplication by $n$ 
%induces an isomorphism from $C$ to $A$, proving that also $\Aut_0(A)$ is infinite.
\end{proof}

\begin{lemma}\label{lemma:UniquenessSubvarieties}
Let $A$ be an abelian variety such that $\operatorname{Aut}_0(A)$ is finite. Then any two simple abelian subvarieties $A_1, A_2$ of $A$ are isogenous 
if and only if they coincide. Moreover, if $A_1$ is a simple abelian subvariety of $A$, then $\Aut_0(A_1)$ is finite.
\end{lemma}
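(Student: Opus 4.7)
The plan is to combine Poincaré's complete reducibility theorem with Lemma \ref{lemma:Autfinito}, so that the question about $\Aut_0(A)$ reduces to a question about a product of abelian subvarieties. The ``if'' direction of the first assertion is trivial (the identity is an isogeny), so the content lies in the ``only if'' direction, which I would prove by contrapositive: I show that if $A_1 \neq A_2$ are isogenous simple abelian subvarieties of $A$, then $\Aut_0(A)$ is infinite.

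Since $A_1$ and $A_2$ are simple and distinct, the identity component of $A_1 \cap A_2$ is trivial (otherwise simplicity would force $A_1 = A_1 \cap A_2 = A_2$), so $A_1 \cap A_2$ is finite and the addition map $A_1 \times A_2 \to A_1 + A_2$ is an isogeny. Applying Poincaré's complete reducibility to $A_1 + A_2 \subseteq A$ yields an abelian subvariety $C$ of $A$ such that $A$ is isogenous to $A_1 \times A_2 \times C$. By Lemma \ref{lemma:Autfinito}, it is enough to produce infinitely many automorphisms of $A_1 \times A_2 \times C$. Fixing any isogeny $\grf : A_2 \to A_1$, for each $k \in \mathbb{Z}$ I consider
\[
\psi_k : (a_1, a_2, c) \longmapsto (a_1 + k\grf(a_2),\, a_2,\, c),
\]
which is a group automorphism of $A_1 \times A_2 \times C$ (its inverse is $\psi_{-k}$). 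The $\psi_k$ are pairwise distinct because $\operatorname{Hom}(A_2, A_1)$ is torsion-free and $\grf \neq 0$, so $\Aut_0(A_1 \times A_2 \times C)$ is infinite, as desired.

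For the ``moreover'' part, I apply Poincaré's complete reducibility to the inclusion $A_1 \subseteq A$ to obtain an abelian subvariety $C \subseteq A$ with $A$ isogenous to $A_1 \times C$. Every automorphism of $A_1$ extends to an automorphism of $A_1 \times C$ by acting trivially on the second factor, giving an embedding $\Aut_0(A_1) \hookrightarrow \Aut_0(A_1 \times C)$; by Lemma \ref{lemma:Autfinito} the latter group is finite (since $\Aut_0(A)$ is), and hence so is $\Aut_0(A_1)$.

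I do not foresee any real obstacle: the whole argument amounts to unpacking Poincaré's theorem in the right way. The only piece of genuine content is the construction of the family $\{\psi_k\}$ together with the verification that its elements are honest automorphisms and pairwise distinct, and both of these reduce immediately to the standard fact that $\operatorname{Hom}$-groups between abelian varieties are torsion-free.
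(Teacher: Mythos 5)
Your proof is correct and follows essentially the same route as the paper: Poincaré reducibility to reduce to a product containing both factors, an infinite-order shear automorphism of that product, and Lemma \ref{lemma:Autfinito} to transfer the conclusion to $A$. The only cosmetic difference is that the paper passes to $B^2\times C$ for an auxiliary $B$ isogenous to both $A_1$ and $A_2$, so that the shear is literally $(b_1,b_2,c)\mapsto(b_1,b_1+b_2,c)$, whereas you stay on $A_1\times A_2\times C$ and twist the shear by a fixed isogeny $\grf:A_2\to A_1$; both versions work.
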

\begin{proof}
Suppose by contradiction that we can find two distinct but {isogenous} simple abelian subvarieties $A_1, A_2$ of $A$. 
By Poincaré's reducibility theorem, there is an abelian subvariety $C$ of $A$ such that the multiplication map $A_1 \times A_2 \times C \to A$ 
is an isogeny. Let $B$ be an abelian variety such that there exists isogenies $\grf_i:B\lra A_i$ and define the isogeny
\[
\varphi: B^2 \times C \to A \quad\text{ by } \quad \grf(b_1,b_2,c)=\grf_1(b_1)+\grf_2(b_2)+c.
\]
Now notice that $\psi(b_1,b_2,c)=(b_1,b_1+b_2,c)$ defines an automorphism of $B^2\times C$ of infinite order, and by the previous Lemma we
conclude that $\Aut_0(A)$ is also infinite, contradiction.
\begin{comment}
Let $H$ be the finite kernel of $\varphi$; in order to give an automorphism of $A$, it suffices to give an automorphism of $B^2 \times C$ that fixes $H$. 
Let  $\psi$ be the endomorphism of $B^2\times C$ given by 
$$
\psi(b_1,b_2,c)=(b_1,b_1+b_2,c)
$$
and notice that it is of infinite order.
Since $H$ is finite, it is contained in the torsion subgroup $(B^2 \times C)[n]$ for some $n>0$. 
Since $(B^2 \times C)[n]$ is finite and $\psi$ stabilizes it as a set, a sufficiently large power $\psi^k$ of $\psi$ fixes 
$(B^2 \times C)[n]$ pointwise and in particular fixes $H$. This implies that $\psi^k$ descends 
to an automorphism of infinite order of $A$, contradicting our hypotheses.
Observe that any matrix $M =\begin{pmatrix}
\alpha & \beta \\ \gamma & \delta
\end{pmatrix} \in \operatorname{GL}_2(\mathbb{Z})$ induces an automorphism $\psi_M$ of $B^2 \times C$ by the formula
\[
\begin{array}{cccc}
\psi_M : & B^2 \times C & \to & B^2 \times C \\
& (b_1,b_2,c) & \mapsto & (\alpha b_1+ \beta b_2, \gamma b_1 + \delta b_2, c).
\end{array}
\]
Pick any $M \in \operatorname{SL}_2(\mathbb{Z})$ of infinite order. Since $H$ is finite, it is contained in the torsion subgroup $(B^2 \times C)[n]$ for some $n>0$. 
Since $(B^2 \times C)[n]$ is finite and $\psi_M$ stabilizes it as a set, a sufficiently large power $\psi_M^k$ of $\psi_M$ fixes $(B^2 \times C)[n]$ pointwise. This implies that $\psi_M^k$ (which is of infinite order) descends to an automorphism of infinite order of $A$, contradicting our hypotheses.
\end{comment}
The proof that for any simple abelian subvariety $A_1$ of $A$ the group $\Aut_0(A_1)$ is finite is completely analogous.
\end{proof}

From now on we fix an abelian variety $A$ with finite automorphism group $\Aut_0(A)$. By the previous Lemma and Poincar\'e reducibility Theorem we know that there 
exist uniquely 
determined simple abelian subvarieties $A_1,\dots,A_h$ of $A$ such that the sum
$$
\grs: A_1\times\cdots \times A_h\lra A  \qquad \grs(a_1,\dots,a_h)=a_1+\dots+a_h 
$$
is an isogeny. We denote by $\Sigma$ the finite kernel of this map and denote by $N$ its order. By Lemma \ref{lemma:UniquenessSubvarieties}, $A_i$ and $A_j$ are not isogenous 
if $i\neq j$ and $\Aut_0(A_i)$ is finite for all $i$. Finally, notice that any abelian variety constructed in this way has finite automorphism group.

\subsection{Construction of the example}
Let $A$ be as above and choose a prime number $p\geq 7$ such that 
\begin{itemize}
 \item[(*)] for $i=1,\dots,h$, for any subgroup $H$ of $A_i$ contained in $A[N]$, and for any nontrivial $\grf\in \Aut_0(A_i/H)$, 
       $p$ is larger than the order of $(A_i/H)^\grf = \{ x \in A_i/H : \grf(x)=x \}$.
\end{itemize}
Notice that if $\grf$ is a nontrivial automorphism of a simple abelian variety then $\grf$ has only finitely many fixed points, so 
a prime number $p$ with this property exists.

Let $S/\mathbb{C}$ be a smooth hypersurface of degree $p$ in $\mathbb{P}^3$ with $\operatorname{Aut}(S) \cong \mathbb{Z}/p\mathbb{Z}$ and such 
that every automorphism of $S$ acts on it without any fixed points; 
an explicit example of such a hypersurface is given in Theorem \ref{thm:AutoS}.
Let $G = \operatorname{Aut}(S) \cong \mathbb{Z}/p\mathbb{Z}$ and set $X := S / G$. 
{We now proceed to describe some basic properties of $X$ (§\ref{sect:PropertiesX}), 
construct a certain smooth projective variety $Y$ of dimension $2+\dim A$ (§\ref{sect:ConstructionY}), 
and prove that $Y$ has automorphism group isomorphic to $A$ (Theorem \ref{thm:AutoY} in §\ref{sect:AutY}).
}

\subsubsection{Properties of $X$}\label{sect:PropertiesX}

\begin{lemma}\label{lemma:XSmoothProjective}
$X$ is a smooth projective variety.
\end{lemma}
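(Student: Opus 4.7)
The plan is to verify projectivity and smoothness of $X = S/G$ separately, exploiting the hypothesis that the finite group $G$ acts on $S$ without fixed points (i.e.\ freely).

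As a first step I would observe that since $G$ is finite and $S$ is quasi-projective, every $G$-orbit is contained in a $G$-stable affine open subset; this guarantees that the geometric quotient $X = S/G$ exists as an algebraic variety and that the quotient morphism $\pi : S \to X$ is finite and surjective. To establish projectivity I would then take an ample line bundle $L$ on $S$ (say the restriction of $\mathcal{O}_{\mathbb{P}^3}(1)$) and form the $G$-invariant line bundle $L' := \bigotimes_{g \in G} g^{*} L$, which remains ample because ampleness is preserved by pullback along finite morphisms and by tensor products. A suitable tensor power of $L'$ admits a $G$-linearization whose stabilizer actions on the fibres are trivial (using freeness of the action), and therefore descends along $\pi$ to an ample line bundle on $X$, proving that $X$ is projective.

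For smoothness, the crucial input is the freeness of the $G$-action, which makes $\pi : S \to X$ a principal $G$-bundle in the étale topology, and in particular an étale (Galois) cover. Since $S$ is smooth of dimension $2$ and smoothness is both étale-local on the source and descends along étale surjections, $X$ is smooth of dimension $2$ as well.

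Given how standard these arguments are, I do not anticipate any serious obstacle: the lemma is essentially a packaging of the classical fact that a free action of a finite group on a smooth projective variety has a smooth projective geometric quotient. The only minor care required is to pick a clean formulation of the descent argument for the ample line bundle, but nothing deeper than invariant averaging is needed.
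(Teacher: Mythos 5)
Your proof is correct and follows essentially the same route as the paper: smoothness from the freeness of the $G$-action (so that $S \to X$ is étale), and projectivity from the standard fact that a quotient of a projective variety by a finite group is projective. The paper simply cites Serre for the latter, whereas you unfold the usual descent-of-an-ample-line-bundle argument; no gap either way.
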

\begin{proof}
$X$ is smooth since $G$ acts on $S$ without fixed points, and is projective since any quotient of a projective variety by a finite group of automorphisms is projective (see \cite[Remarque on page 51]{MR0098097}).
\end{proof}
\begin{lemma}\label{lemma:TrivialAutomorphisms}
$X$ does not admit any nontrivial automorphisms.
\end{lemma}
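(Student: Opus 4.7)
The strategy is to lift any automorphism of $X$ to an automorphism of $S$ and then invoke the fact that $\Aut(S) = G$. Since $G$ acts freely on $S$, the quotient map $\pi: S \to X$ is a Galois étale (i.e. unramified) covering of degree $p$ with deck group $G$.

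The first step is to show that $S$ is the universal cover of $X$. This follows from the Lefschetz hyperplane theorem applied to the smooth hypersurface $S \subset \mathbb{P}^3$: we obtain $\pi_1(S) = \pi_1(\mathbb{P}^3) = 1$, so $S$ is simply connected, and $\pi: S \to X$ is then the universal analytic cover of $X$.

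Next, given $\bar\grf \in \Aut(X)$, the composition $\bar\grf \circ \pi: S \to X$ is again a covering map, and since $S$ is simply connected the lifting criterion from covering space theory produces a continuous (hence, since $\pi$ and $\bar\grf$ are holomorphic, holomorphic) map $\grf: S \to S$ satisfying $\pi \circ \grf = \bar\grf \circ \pi$. Applying the same reasoning to $\bar\grf^{-1}$ yields an inverse of $\grf$ up to a deck transformation, so $\grf \in \Aut(S)$. By hypothesis $\Aut(S) = G$, and by the very definition of $X = S/G$ every element of $G$ descends to the identity on $X$, so $\bar\grf = \operatorname{id}_X$.

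The essential input is the simple connectedness of $S$; once that is in hand, the lifting and descent arguments are formal consequences of covering space theory. The only real subtlety is checking that a holomorphic (rather than merely continuous) automorphism $\bar\grf$ admits a holomorphic lift, which I expect to be the main point to verify carefully — but this follows from the fact that lifts of holomorphic maps through holomorphic covering maps are automatically holomorphic, because $\pi$ is a local biholomorphism.
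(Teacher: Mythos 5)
Your proof is correct and takes essentially the same approach as the paper's: lift the automorphism through the universal cover $\pi : S \to X$ (using simple connectedness of $S$, which the paper also relies on), observe that the lift is an automorphism of $S$ and hence lies in $G = \operatorname{Aut}(S)$, and conclude that it descends to the identity on $X$. The only cosmetic difference is that you establish invertibility of the lift by also lifting $\bar\grf^{-1}$, whereas the paper notes that the lift is a covering map of the connected, simply connected $S$ and hence an isomorphism.
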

\begin{proof}
Let $\varphi : X \to X$ be an automorphism. Composing with the natural projection $\pi : S \to X$, we obtain a map $\varphi \circ \pi : S \to X$ which, since $S$ is simply connected, lifts to a map $\tilde{\varphi} : S \to S$. Clearly $\tilde{\varphi}$ is algebraic, and it is easily seen to be a covering map, so it is an isomorphism since $S$ is connected and simply connected. It follows that $\tilde{\varphi} : S \to S$ is in $G$, hence (by passing to the quotient) it induces the identity on $X$. Since on the other hand $\tilde{\varphi}$ induces $\varphi$ on $X$, we get $\varphi = \operatorname{id}_X$ as claimed.
\end{proof}

\begin{lemma}\label{lemma:GeneralType}
$X$ has Kodaira dimension $2$.
\end{lemma}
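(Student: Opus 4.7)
The plan is to show that $X$ carries an ample canonical bundle $K_X$, which immediately yields $\kappa(X) = \dim X = 2$.

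First, I would compute $K_S$ via adjunction. Since $S \subset \mathbb{P}^3$ is a smooth hypersurface of degree $p$, the standard formula gives $K_S = (K_{\mathbb{P}^3} + S)|_S = \mathcal{O}_S(p-4)$. With $p \geq 7$ we have $p-4 \geq 3 > 0$, and $\mathcal{O}_S(p-4)$ is the restriction to the closed subvariety $S$ of a very ample line bundle on $\mathbb{P}^3$, so $K_S$ is ample. (In particular $\kappa(S) = 2$.)

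Second, because $G$ acts on $S$ without fixed points, the quotient map $\pi : S \to X$ is a finite étale Galois cover of degree $p$. Its ramification divisor is zero, so Riemann--Hurwitz (or simply the fact that $\pi^*\Omega_X^1 = \Omega_S^1$) gives $\pi^* K_X = K_S$.

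Finally, I would invoke the standard descent of ampleness along finite surjective morphisms: a line bundle $L$ on a projective variety $Z$ is ample whenever $f^* L$ is ample for some finite surjective $f: Y \to Z$ (via Nakai--Moishezon, since finite surjective pullback preserves and reflects positivity of intersection numbers on all subvarieties). Applying this with $f = \pi$ and $L = K_X$, we conclude that $K_X$ itself is ample. Hence $X$ is a smooth projective surface with ample canonical bundle, i.e.\ a minimal surface of general type, and $\kappa(X) = 2$. No real obstacle is expected; the three ingredients (adjunction, canonical bundle under étale covers, descent of ampleness) are all classical.
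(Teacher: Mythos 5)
Your proof is correct, and it shares its first half with the paper's: both compute $K_S = \mathcal{O}_S(p-4)$ by adjunction and observe that it is ample because $p \geq 7$. Where you diverge is in the transfer from $S$ to $X$. The paper simply invokes the invariance of Kodaira dimension under finite étale covers, so that $\operatorname{kod}(X) = \operatorname{kod}(S) = 2$, and never needs to say anything about $K_X$ itself. You instead use $\pi^* K_X = K_S$ (valid since $\pi$ is étale) together with descent of ampleness along finite surjective morphisms (Nakai--Moishezon), concluding the stronger statement that $K_X$ is ample. Both transfer principles are classical and both arguments are complete; yours yields a bit more (namely that $X$ is a minimal surface of general type with ample canonical class), at the cost of invoking a slightly heavier tool than the paper's one-line appeal to étale invariance of the Kodaira dimension. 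For the purposes of the rest of the paper only $\operatorname{kod}(X)=2$ is used, so either route suffices.
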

\begin{proof}
Kodaira dimension is invariant under finite étale covers, so $\operatorname{kod}(X)=\operatorname{kod}(S)$. By adjunction, $K_S = O_{\mathbb{P}^3}(p-3-1)|_S$ is ample, so $\operatorname{kod}(S)=\operatorname{dim}(S)=2$.
%\textbf{Dimostrazione a mano.}
%It suffices to prove that as $d \to \infty$ we do \textit{not} have
%\[
%h^0\left(X,K_X^d\right) =O(d),
%\]
%where $K_X$ is the canonical bundle of $X$. We shall show that $h^0\left(X,K_X^d\right) \gg d^2$ as $d$ goes to infinity along the integers satisfying $5 \mid d$.
%Since $\operatorname{H}^0(X,K_X^d) = \operatorname{H}^0(S,K_S^d)^G$ {\color{red} CHECK,} and since by adjunction $K_S = O_{\mathbb{P}^3}(1)|_S$, we have that $\operatorname{H}^0(X,K_X^d)$ can be identified with the space of homogeneous polynomials of degree $d$ that are invariant under the action of $G$, modulo the $d$-graded part $(f)_d$ of the ideal generated by $f$ in the ring of $G$-invariant polynomials. Since $f$ involves the variable $x_0$, there is an injection
%\[
%\begin{array}{ccc}
%\mathbb{C}[y_1,y_2,y_3]_{d/5} & \hookrightarrow & \mathbb{C}[x_0,x_1,x_2,x_3]_d^G / (f)_d \\
%y_i & \mapsto & x_i^5
%\end{array}
%\]
%which shows that (provided that $5 \mid d$) one has 
%\[
%h^0\left(X,K_X^d\right) \geq \dim_\mathbb{C} \mathbb{C}[y_1,y_2,y_3]_{d/5} = {d/5+2 \choose 2} \gg d^2.
%\]
\end{proof}

\begin{lemma}\label{lemma:MapsToA}
The Albanese variety of $X$ is trivial, therefore there are no non-constant maps from $X$ to any abelian variety.
\end{lemma}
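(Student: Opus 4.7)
My plan is to reduce the statement for $X$ to the analogous (and standard) statement for $S$, exploiting the fact that $\pi\colon S\to X$ is a finite étale cover. Once the triviality of $\operatorname{Alb}(X)$ is established, the universal property of the Albanese immediately yields the second assertion: any morphism from $X$ to an abelian variety factors through $\operatorname{Alb}(X)=0$ and is therefore constant.

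The first step is to show that $\operatorname{Alb}(S)=0$. Since $S$ is a smooth surface in $\mathbb{P}^3$, this is classical: from the short exact sequence
\[
0 \lra \mathcal{O}_{\mathbb{P}^3}(-p) \lra \mathcal{O}_{\mathbb{P}^3} \lra \mathcal{O}_S \lra 0
\]
one reads off $H^1(S,\mathcal{O}_S)=0$ using $H^1(\mathbb{P}^3,\mathcal{O})=H^2(\mathbb{P}^3,\mathcal{O}(-p))=0$ (alternatively, the Lefschetz hyperplane theorem gives $H^1(S,\mathbb{Z})=0$). Since $\dim \operatorname{Alb}(S) = h^1(S,\mathcal{O}_S)$, this forces $\operatorname{Alb}(S)=0$.

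The second step is to transfer this to $X$. Because $G$ acts freely on $S$, the quotient map $\pi\colon S\to X$ is finite étale, and in particular surjective. Consider the Albanese map $\alpha_X\colon X\to \operatorname{Alb}(X)$ and the composition $\alpha_X\circ \pi\colon S\to \operatorname{Alb}(X)$. By the universal property of $\operatorname{Alb}(S)$, after choosing compatible basepoints this morphism factors through $\operatorname{Alb}(S)=0$, so it is constant. Since $\pi$ is surjective, $\alpha_X$ is constant as well; but the image of the Albanese map generates $\operatorname{Alb}(X)$ as an algebraic group, so $\operatorname{Alb}(X)=0$.

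The third and final step is formal: any morphism from $X$ to an abelian variety $T$ (sending a chosen basepoint to $0$) factors uniquely through the Albanese, hence through $0$, hence is constant. There is essentially no serious obstacle; the only point requiring minor care is the interplay of basepoints with the functoriality of the Albanese, which is routine.
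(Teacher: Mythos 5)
Your proof is correct, but it takes a somewhat different route from the paper's. The paper argues entirely on $X$: it notes that $S$ is the universal cover of $X$, so $\pi_1(X)\cong\mathbb{Z}/p\mathbb{Z}$ is finite, hence $H_1(X,\mathbb{Q})=0$ and therefore $H^1(X,\mathbb{C})=0$; by Hodge theory $h^{1,0}(X)=0$ and $\dim\operatorname{Alb}(X)=0$. You instead first kill $\operatorname{Alb}(S)$ via the vanishing $H^1(S,\mathcal{O}_S)=0$ (ideal-sheaf sequence or Lefschetz) and then descend to $X$ using functoriality of the Albanese together with the surjectivity of the finite étale map $\pi\colon S\to X$ and the fact that the image of the Albanese map generates $\operatorname{Alb}(X)$. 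Both arguments ultimately rest on a Lefschetz-type fact about the hypersurface $S$ (simple connectedness in the paper, $H^1$-vanishing for you), but the descent mechanisms differ: the paper's is purely topological/Hodge-theoretic on $X$, while yours leans on the universal property of the Albanese, which has the mild advantage of not requiring you to know that $S\to X$ is the \emph{universal} cover — only that it is a surjection from a variety with trivial Albanese. Both are complete; the only step you flag as "routine" (the basepoint bookkeeping) is indeed harmless, since a morphism to an abelian variety differs from a pointed one by a translation, which does not affect constancy.
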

\begin{proof}
%Let $X$ be a (complex) projective surface with $\pi_1(X)$ finite. 
Clearly $S$ is the universal cover of $X$, so $\pi_1(X)$ is isomorphic to $\operatorname{Aut}(S \to X) \cong \mathbb{Z}/p\mathbb{Z}$ 
and in particular is finite.
Since the Albanese variety of $X$ is dual to its Picard variety, one has 
$\dim \operatorname{Alb}(X)=\operatorname{dim} \operatorname{H}^1(X,\mathcal{O}_X) =h^{1,0}(X)$; on the other hand, the fact that 
$\pi_1(X)$ is finite implies that $H_1(X,\mathbb{Q})$ is trivial, so $h^{1,0}(X) \leq h^1(X)=\dim H^1(X,\mathbb{C})=0$, hence 
$\operatorname{Alb}(X)$ is trivial as claimed.
\end{proof}

\subsubsection{A nontrivial {$A$-torsor} $Y \to X$}\label{sect:ConstructionY}

\begin{definition}\label{def:Y} Fix an isomorphism $\chi : G \to \mathbb{Z}/p\mathbb{Z}$ and a point $P$ such that 
\begin{itemize}
 \item[(**)] $P$ is a $p$-torsion point of $A$ which is not contained 
in any proper abelian subvariety of $A$. 
\end{itemize}
The abelian subvarieties of $A$ are all of the form $A_{i_1}+\dots+A_{i_k}$, so a point with this property exists.
We let $\mathbb{Z}/p\mathbb{Z}$ act on the group generated by $P$ in the obvious way 
(that is, for $n \in \mathbb{Z}$ the class of $n$ in $\mathbb{Z}/p\mathbb{Z}$ sends $P$ to $nP$).
We set $Y = (S \times A) / G$, where the action of $G$ on the product $S \times A$ is given by
\[
g \cdot (s,a) = (g \cdot s, a+\chi(g)P).
\]
\end{definition}
As in the proof of Lemma \ref{lemma:XSmoothProjective}, it is easy to see that $Y$ is a smooth projective variety; 
moreover, $Y$ has a natural structure of principal space under $A$. Indeed
%The quotient $A \backslash Y$ is naturally isomorphic to $X$, and we denote by $\pi$ the natural projection $Y \to X$.
for each $b \in A$, the translation map
\[
\begin{array}{ccc}
S \times A & \to & S \times A \\
(s,a) & \mapsto & (s,a+b)
\end{array}
\]
commutes with the action of $G$, so it descends to an automorphism of $Y=(S \times A)/G$ that we denote by $y\mapsto b+y$ or by $\tau_b$. 
This defines an action of $A$ on $Y$ which is free and transitive along the fibers of the map $Y\to X$.
%so at least in the analytic topology $Y$ is a {\color{red} principal $E$-bundle} over $X$. 
Moreover, $Y \to X$ is an $A$-torsor in the analytic (and in fact even étale) topology: indeed, $S$ is an 
étale covering of $X$, and the pullback of $Y$ to $S$ is trivial.

\begin{lemma}\label{lemma:YNonTrivial}
{The map $Y \to X$ does not admit a section (in the analytic topology).}
\end{lemma}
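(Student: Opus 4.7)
The plan is to argue by contradiction: assume a section $\sigma\colon X\to Y$ exists in the analytic topology, and derive that $P=0$, contradicting (**). The first step is to pull back $\sigma$ along the étale $G$-cover $S\to X$. Since $Y$ is obtained as the quotient of the trivial $A$-torsor $S\times A\to S$ by the $G$-action $g\cdot(s,a)=(g\cdot s,\ a+\chi(g)P)$, the pullback of the $A$-torsor $Y\to X$ to $S$ is canonically this trivial $A$-torsor. Hence $\sigma$ pulls back to a continuous section of $S\times A\to S$, i.e.\ a continuous map $f\colon S\to A$, and the condition that it descends to a section over $X$ is precisely the twisted equivariance
\[
f(g\cdot s)=f(s)+\chi(g)P\qquad \text{for all } g\in G,\ s\in S.
\]

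The second step is to exploit the simple-connectedness of $S$ (valid because $S$ is a smooth hypersurface in $\mathbb{P}^3$, by Lefschetz). The map $f$ admits a continuous lift $\tilde f\colon S\to \mathbb{C}^g$ to the universal cover of $A=\mathbb{C}^g/\Lambda$. Fix any lift $\tilde P\in\mathbb{C}^g$ of $P$. For each $g\in G$, the function $s\mapsto \tilde f(g\cdot s)-\tilde f(s)-\chi(g)\tilde P$ is continuous, takes values in the discrete lattice $\Lambda$, and is defined on the connected space $S$; hence it is a constant $\lambda_g\in\Lambda$. Writing $\tilde Q_g:=\chi(g)\tilde P+\lambda_g$, we obtain $\tilde f(g\cdot s)-\tilde f(s)=\tilde Q_g$ for every $g\in G$ and every $s\in S$.

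The third step is a rigidity argument in the universal cover. Applying the identity twice (with $g$ and then with $h$) shows that $g\mapsto \tilde Q_g$ is a group homomorphism from the finite group $G$ into the torsion-free additive group $\mathbb{C}^g$, so every $\tilde Q_g$ vanishes. Reducing mod $\Lambda$ yields $\chi(g)P=0$ in $A$ for every $g\in G$; taking any $g$ with $\chi(g)=1$ gives $P=0$, contradicting the choice (**) of $P$ as a nonzero $p$-torsion point.

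The only real subtlety is the rigidity used in the second step: one must combine connectedness of $S$ with discreteness of $\Lambda$ to force $\tilde f(g\cdot s)-\tilde f(s)$ to be independent of $s$. Once that is in hand, the cocycle identity and the torsion-freeness of $\mathbb{C}^g$ do the rest automatically. Note that only $P\neq 0$ is actually used here; the stronger assumption in (**) that $P$ is not contained in any proper abelian subvariety will play its role later in the proof of Theorem \ref{thm:AutoY}.
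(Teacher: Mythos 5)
Your proof is correct, but it takes a genuinely different route from the paper's. The paper argues cohomologically: it classifies $A$-torsors on $X$ by $H^1(X,\mathcal{A})$, uses the multiplication-by-$n$ sequence together with Lemma \ref{lemma:MapsToA} and GAGA (every holomorphic map $X\to A$ is constant) to show that $H^1(X,\mathcal{A}[p])\to H^1(X,\mathcal{A})$ is injective, and then reduces the nontriviality of $Y$ to the visibly nontrivial finite subcover $Z=(S\times\langle P\rangle)/G\cong S\to X$. You instead work directly with a hypothetical section: pull it back along the $G$-cover $S\to X$ to get $f\colon S\to A$ with $f(g\cdot s)=f(s)+\chi(g)P$, lift $f$ to the universal cover $\mathbb{C}^{\dim A}$ of $A$ using simple connectedness of $S$, and use connectedness of $S$ plus discreteness of the lattice to see that $\tilde f(g\cdot s)-\tilde f(s)$ is a constant $\tilde Q_g$; the cocycle identity makes $g\mapsto\tilde Q_g$ a homomorphism from the finite group $G$ into a torsion-free group, forcing $\chi(g)P=0$ and hence $P=0$, a contradiction. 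Your approach buys elementarity and a slightly stronger conclusion: it avoids sheaf cohomology, GAGA and the Albanese computation of Lemma \ref{lemma:MapsToA} entirely, needs only $P\neq 0$ rather than the full strength of (**), and rules out even continuous sections. The paper's approach buys a conceptual placement of the obstruction (it exhibits the class of $Y$ as the image of the class of the cover $Z$ in $H^1(X,\mathcal{A}[p])$) and recycles Lemma \ref{lemma:MapsToA}, which it has already proved and reuses later. Both arguments ultimately rest on the same geometric input, namely that $S\to X$ is a nontrivial simply connected cover and $P$ generates a nontrivial subgroup; only cosmetic points in your write-up deserve attention (the clash of $g$ as a group element with $\mathbb{C}^g$, and spelling out that $P\neq 0$ follows from (**) because $\{0\}$ is a proper abelian subvariety once $\dim A\geq 1$).
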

\begin{proof}
Notice that $Y \to X$ {admits a section} if and only if it is trivial as a torsor. 
Indeed if $Y\to X$ has a section $s$ then the map $A\times X\to Y$ given by $(a,x)\mapsto a+s(x)$ is an isomorphism of {torsors}. 
Let $\mathcal{A}$ be the sheaf of holomorphic functions on $X$ with values in $A$; {$A$-torsors on $X$ are classified by $H^1(X,\mathcal{A})$, where the cohomology is taken in the analytic category}. For any fixed $n>0$, consider the exact sequence of sheaves on $X$
\[
0 \to \mathcal{A}[n] \to \mathcal{A} \xrightarrow{[n]} \mathcal{A} \to 0
\]
and take cohomology to obtain the long exact sequence
\[
0 \to H^0(X,\mathcal{A}[n]) \to H^0(X,\mathcal{A}) \xrightarrow{[n]} H^0(X,\mathcal{A}) \to H^1(X,\mathcal{A}[n]) \to H^1(X,\mathcal{A}).
\]
{By Serre's GAGA principle,} all maps from $X$ to $A$ are algebraic, so by Lemma \ref{lemma:MapsToA} we have $H^0(X,\mathcal{A}) = A$, and $H^0(X,\mathcal{A}) \xrightarrow{[n]} H^0(X,\mathcal{A})$ is just $A \xrightarrow{[n]} A$, which is surjective. It follows in particular that the natural arrow 
\begin{equation}\label{eq:injectivityGeneral}
H^1(X,\mathcal{A}[n]) \to H^1(X,\mathcal{A})
\end{equation}
is injective. 
Consider $Z := (S \times \langle P \rangle) / G \hookrightarrow Y$, where $\langle P \rangle$ denotes the order $p$ subgroup of $A(\mathbb{C})$ generated by $P$.
By the injectivity of \eqref{eq:injectivityGeneral} (with $n=p$), proving that $Z$ is a nontrivial covering space of $X$ suffices to show that $Y \to X$ is a nontrivial {torsor}. But this is 
clear, because the natural map $S \to S \times A \to (S \times \langle P \rangle) / G$ is injective and surjective, hence (since $S$ is compact) a homeomorphism. It follows 
that $Z \cong S$ is a nontrivial cover of $X$ as desired.
\end{proof}

\subsection{Determination of $\operatorname{Aut}(Y)$}\label{sect:AutY}
In this section we show:

\begin{theorem}\label{thm:AutoY}
The automorphism group of $Y$ is isomorphic to $A$.
\end{theorem}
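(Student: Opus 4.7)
The plan is to prove that every $\varphi \in \operatorname{Aut}(Y)$ is a translation by some element of $A$. I would proceed in two parts: first identify $A$ with the identity component $\operatorname{Aut}^0(Y)$, which lets me descend any $\varphi$ to an automorphism of $X$ that must be trivial by Lemma \ref{lemma:TrivialAutomorphisms}; then lift $\varphi$ through the étale $G$-cover $S \times A \to Y$ and use conditions (*) and (**) to force the $\operatorname{Aut}_0(A)$-part of the lift to be trivial.

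For the first part, I would first compute $\operatorname{Alb}(Y) \cong A' := A/\langle P\rangle$ (this follows quickly from the universal property together with $\operatorname{Alb}(S) = 0$ and the explicit definition of the $G$-action on $A$). Functoriality of the Albanese then produces a homomorphism $\operatorname{Aut}^0(Y) \to \operatorname{Aut}(A')$ whose image is connected and contains the translation subgroup $A' \subseteq \operatorname{Aut}(A')$; since $\operatorname{Aut}_0(A')$ is finite (by Lemma \ref{lemma:Autfinito}, as $A'$ is isogenous to $A$), the image equals $A'$ exactly. The kernel $K$ preserves each fiber of $Y \to A'$, and these fibers are copies of $S$ with $\operatorname{Aut}(S) \cong \mathbb{Z}/p\mathbb{Z}$ finite, so the identity component $K^0$ acts trivially on every fiber and hence on all of $Y$; this makes $K$ finite, and comparing dimensions forces $\operatorname{Aut}^0(Y) = A$. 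In particular $A$ is normal in $\operatorname{Aut}(Y)$, so any $\varphi$ preserves the fibration $Y \to X$ and descends to an automorphism $\bar\varphi$ of $X$, which by Lemma \ref{lemma:TrivialAutomorphisms} must be $\operatorname{id}_X$.

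Pulling $\varphi$ back along $S \to X$ then produces an $S$-automorphism $\tilde\varphi$ of $S \times_X Y \cong S \times A$, equivariant for the $G$-action $g \cdot (s,a) = (g \cdot s, a + \chi(g)P)$. For each $s$, its restriction to the fiber is an automorphism of $A$ as a variety, hence of the form $a \mapsto c(s)(a) + t(s)$ with $c(s) \in \operatorname{Aut}_0(A)$ and $t(s) \in A$. Connectedness of $S$ together with the discreteness of $\operatorname{Aut}_0(A)$ forces $c$ to be constant, and triviality of $\operatorname{Alb}(S)$ forces $t$ to be constant. So $\tilde\varphi(s,a) = (s, c(a) + t)$ for fixed $c \in \operatorname{Aut}_0(A)$ and $t \in A$, and a direct check of $G$-equivariance yields $c(P) = P$.

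The main obstacle is the final step: showing $c = \operatorname{id}_A$. Since the simple abelian subvarieties $A_1, \dots, A_h$ of $A$ are pairwise non-isogenous (Lemma \ref{lemma:UniquenessSubvarieties}), $c$ must preserve each $A_i$, and therefore descends to an automorphism $\bar c_i \in \operatorname{Aut}_0(A/B_i)$ where $B_i := \sum_{j \neq i} A_j$. A direct computation using $\ker \sigma \subseteq (\prod_j A_j)[N]$ identifies $A/B_i$ with $A_i/H_i$ for a finite subgroup $H_i \subseteq A_i \cap A[N]$. By (**) the image $\bar P_i$ of $P$ in $A/B_i$ is nonzero and $p$-torsion, hence of exact order $p$, and it is fixed by $\bar c_i$. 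If $\bar c_i$ were nontrivial, condition (*) would force $|(A_i/H_i)^{\bar c_i}| < p$, contradicting the existence of the order-$p$ element $\bar P_i$ in the fixed subgroup. Hence each $\bar c_i$ is trivial, meaning $(c - \operatorname{id})(A) \subseteq B_i$ for every $i$, and so $(c - \operatorname{id})(A) \subseteq \bigcap_i B_i$. The latter intersection is a finite subgroup of $A$ (one verifies $\bigcap_i B_i \cong (\prod_j H_j)/\Sigma$), and since $c - \operatorname{id}$ is a morphism of abelian varieties its image is an abelian subvariety contained in a finite set, hence $\{0\}$. Thus $c = \operatorname{id}_A$, so $\tilde\varphi(s,a) = (s, a + t)$ descends on $Y$ to the translation $\tau_t \in A$, and $\varphi \in A$ as required.
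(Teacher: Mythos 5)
Your proof is correct, but it takes a genuinely different route from the paper's in both of its main steps. For fiber-preservation the paper never computes $\operatorname{Aut}^0(Y)$: it shows directly that each map $\varphi_x\colon Y_x\to X$ is constant, using Lemma \ref{prop:ImagesOfAbelianVarieties} together with the fact that a surface of Kodaira dimension $2$ carries no positive-dimensional algebraic system of rational or elliptic curves, and only then invokes Lemma \ref{lemma:TrivialAutomorphisms}; your alternative — $\operatorname{Alb}(Y)\cong A/\langle P\rangle$, hence $A=\operatorname{Aut}^0(Y)$ is normal and $\varphi$ permutes the $A$-orbits — is sound, at the cost of invoking the theory of automorphism group schemes and Albanese functoriality (you should say a word on why the natural map $Y\to A/\langle P\rangle$ \emph{is} the Albanese map and why $\operatorname{Aut}^0(Y)\to\operatorname{Aut}(A/\langle P\rangle)$ is algebraic, though both are standard). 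The bigger divergence is the endgame: the paper argues by induction on the number $h$ of simple factors, studying for $h=1$ the integer $n(x)=\det\bigl((1-\varphi|_{Y_x})_*\bigr)$ on $H_1(Y_x,\mathbb{Q})$ and ruling out $n>0$ via the covering $Y^\varphi\to X$, Lemma \ref{lemma:YNonTrivial} and $\#\pi_1(X)=p$, and handling $h>1$ through the categorical quotient $Y'=S\times^G(A/A_1)$. You instead trivialize the torsor globally on the cover $S$, write the lift as $(s,a)\mapsto(s,c(a)+t)$ using rigidity and $\operatorname{Alb}(S)=0$, extract $c(P)=P$ from $G$-equivariance, and kill $c$ directly from (*) and (**) together with the finiteness of $\bigcap_i B_i$ — no induction, no determinant, and no use of Lemma \ref{lemma:YNonTrivial} or of $\pi_1(X)$, since the nontriviality of the torsor enters only through the order-$p$ point $P$. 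Your version is arguably shorter and makes transparent exactly where (*) and (**) are used; the paper's argument, by contrast, is self-contained given its lemmas and does not need to exhibit a fixed point of the linear part. One small remark: the parenthetical identification $\bigcap_i B_i\cong\bigl(\prod_j H_j\bigr)/\Sigma$ is unnecessary — finiteness follows because a positive-dimensional abelian subvariety of $\bigcap_i B_i$ would contain some simple $A_j$ (all simple abelian subvarieties of $A$ are among the $A_i$), contradicting the finiteness of $A_j\cap B_j$.
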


\subsubsection{Preliminaries on simple abelian varieties}
We shall need the following basic fact about simple abelian varieties.

\begin{lemma}\label{lemma:DeterminantOnCohomology}
Let $T$ be a projective complex torus. Let $A$ be the abelian variety obtained from $T$ by fixing an arbitrary origin; notice that $T$ is naturally a torsor under $A$. Finally let $\gra$ be an automorphism of $T$ (as a projective variety) and assume that $A$ is simple. Then:
\begin{enumerate}
   \item if $\alpha$ is translation by a point of $A$, then the determinant of $(1-\alpha)_* : H_1(T,\mathbb{Q}) \to H_1(T,\mathbb{Q})$ is $0$;
   \item if $\alpha$ is \textit{not} translation by a point of $A$, then $\gra$ has at least one fixed point and the determinant of 
         $(1-\alpha)_* : H_1(T,\mathbb{Q}) \to H_1(T,\mathbb{Q})$ is the number of fixed points of $\alpha$.
\end{enumerate}
\end{lemma}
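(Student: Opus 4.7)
After fixing an origin on $T$ to identify it with $A$, every biregular automorphism $\alpha$ of $A$ decomposes uniquely as $\alpha = \tau_b \circ \varphi$ with $\varphi \in \operatorname{Aut}_0(A)$ a group automorphism and $\tau_b$ the translation by some $b \in A$. The first observation I would use throughout is that every translation is homotopic to the identity, hence acts trivially on $H_1(T,\mathbb{Q})$; consequently $\alpha_* = \varphi_*$ on $H_1$ and $(1-\alpha)_* = (1-\varphi)_*$.

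The first item is then immediate: if $\alpha = \tau_b$ is a translation, $\varphi$ is the identity, so $(1-\alpha)_* = 0$ and the determinant vanishes.

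For the second item I would assume $\alpha$ is not a translation, so that $\varphi \neq \mathrm{id}$. The key input is the simplicity of $A$: the identity component of $\ker(1-\varphi)$ is an abelian subvariety of $A$, and since $1-\varphi$ is a nonzero endomorphism, this component cannot equal $A$. Simplicity then forces it to be $0$, so $\ker(1-\varphi)$ is finite and $1-\varphi$ is an isogeny $A \to A$. In particular $1-\varphi$ is surjective, so the equation $(1-\varphi)(x) = b$ admits some solution $x_0$, which is by construction a fixed point of $\alpha$. The full fixed-point set is the coset $x_0 + \ker(1-\varphi)$, so the number of fixed points equals $\#\ker(1-\varphi) = \deg(1-\varphi)$.

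To conclude, I would invoke the standard identity $\deg(\psi) = \det\bigl(\psi_* \mid H_1(A,\mathbb{Z})\bigr)$, valid for every isogeny $\psi$ of an abelian variety, and apply it with $\psi = 1-\varphi$. This gives $\det((1-\alpha)_*) = \det((1-\varphi)_*) = \deg(1-\varphi)$, which matches the fixed-point count computed above. The only delicate step is the use of simplicity to rule out the case $\dim \ker(1-\varphi) > 0$; everything else is standard structural information about automorphisms of abelian varieties.
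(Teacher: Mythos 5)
Your proof is correct and follows essentially the same route as the paper's: decompose $\alpha$ as a translation composed with a group automorphism $\varphi$, use that translations act trivially on $H_1$, invoke simplicity of $A$ to show $1-\varphi$ is an isogeny (the paper argues via its image being a nonzero abelian subvariety, you via its connected kernel being zero — equivalent), identify the fixed locus as a torsor under the finite kernel, and conclude with the identity $\deg(\psi)=\det(\psi_*\mid H_1)$. The only cosmetic difference is that the paper re-chooses the origin at a fixed point before applying the degree formula, a step your version renders unnecessary.
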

\begin{proof}
The statement of (1) is obvious, because translations induce the identity on $H_1(T,\mathbb{Q})$.
Assume now that $\gra$ is not a translation and identify $T$ with $A$ by choosing a point $t_0 \in T$ as the origin. We prove first that $\gra$ has at least one fixed point. 
Letting $a=\gra(t_0)-t_0$ we have $\gra(t)=\grf(t)+a$, where $\grf\in \Aut_0(A)$ is different from the identity. Let $\psi=\grf-\operatorname{id}_A :A\lra A$; it is an endomorphism of $A$, and since $A$ is simple and $\grf$ is nontrivial the image of $\psi$
is $A$ itself. One checks that $b \in T$ is a fixed point of $\gra$ if and only if $\psi(b)=-a$. As $\psi$ is surjective, such $b$ exist, and there are only finitely many of them because the set $\{ b : \psi(b)=-a\}$ is naturally a torsor under the finite group $\ker \psi$.
We can then choose the origin $t_0$ to be a fixed point of $\gra$, in which case $\gra$ belongs to $\operatorname{Aut}_0(A)$ and we have $\psi(t)=\gra(t)-t$, 
so that $A^\grf$ is equal to the kernel of $\psi$ and its order is the degree of $\psi$. The lemma follows from the fact that
for a complex torus $H_n(\psi,\mathbb Q)=\det(\psi_*:H_1(T,\mathbb Q)\lra H_1(T,\mathbb Q))$.
\end{proof}

\subsubsection{Preliminaries on surfaces of Kodaira dimension 2}
We shall need the following consequence of \cite{MR2393263}.

\begin{lemma}\label{prop:ImagesOfAbelianVarieties}
Let $S$ be a surface of Kodaira dimension 2 and $A$ be an abelian variety. 
The image of any morphism $f : A \to S$ is either a point or a (possibly singular) irreducible curve of geometric genus at most one.
\end{lemma}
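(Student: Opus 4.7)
The plan is to analyze the dimension of $f(A)$, which, being irreducible and closed in $S$, has dimension $0$, $1$, or $2$. If $\dim f(A) = 0$ the image is a point and we are done. If $\dim f(A) = 2$ then $f(A) = S$ and $f$ is surjective; this case is where the hypothesis $\kappa(S) = 2$ must be used, and I would rule it out by invoking \cite{MR2393263}, which in our setting provides the statement that no dominant morphism can exist from an abelian variety (of Kodaira dimension $0$) onto a variety of general type such as $S$.

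For the remaining case $\dim f(A) = 1$, write $C := f(A)$ with its reduced structure and let $\pi : \tilde{C} \to C$ denote its normalization. Since $A$ is smooth and therefore normal, $f$ factors uniquely through a surjective morphism $\tilde{f} : A \to \tilde{C}$, so the task reduces to showing $g(\tilde{C}) \leq 1$. I would argue via the Albanese of $A$ together with the Abel--Jacobi embedding. Fix $a_0 \in A$, set $c_0 := \tilde{f}(a_0)$, and let $j : \tilde{C} \hookrightarrow \operatorname{Jac}(\tilde{C})$ be the Abel--Jacobi map normalized by $j(c_0) = 0$. The composition $\gamma := j \circ \tilde{f} : A \to \operatorname{Jac}(\tilde{C})$ sends $a_0$ to the origin; after relocating the origin of $A$ to $a_0$, the rigidity of morphisms between abelian varieties forces $\gamma$ to be a group homomorphism, so $\gamma(A)$ is an abelian subvariety $B$ of $\operatorname{Jac}(\tilde{C})$. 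On the other hand $B = j(\tilde{C})$ since $\tilde{f}$ is surjective, so $\dim B \leq 1$. If $\tilde{C}$ has positive genus then $j$ is an embedding, which forces $\tilde{C} \cong B$ to be a one-dimensional abelian variety, i.e.\ an elliptic curve. Hence $g(\tilde{C}) \leq 1$ in every case.

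The main obstacle is unambiguously the two-dimensional case: it is the only place where $\kappa(S) = 2$ is used, and it genuinely requires nontrivial input about images of morphisms from abelian varieties (essentially, that such images have Kodaira dimension at most $0$). The one-dimensional case, by contrast, reduces cleanly to rigidity of morphisms between abelian varieties together with the elementary observation that a smooth projective curve isomorphic to a one-dimensional abelian variety must be elliptic.
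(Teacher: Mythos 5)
Your proposal is correct. For the two-dimensional case you and the paper do exactly the same thing: both invoke \cite{MR2393263} to exclude a surjection $A \to S$, the paper by quoting the structure statement (a finite \'etale cover of $S$ would be a product of projective spaces and an abelian variety, which has non-positive Kodaira dimension) and you by quoting the resulting consequence for varieties of general type; these are the same use of the same input. Where you genuinely diverge is the one-dimensional case. The paper simply applies \cite[Theorem 1.1]{MR2393263} a second time, to the surjection $A \to \tilde{Z}$ obtained from the universal property of normalization, and reads off that $\tilde{Z}$ is dominated by $\mathbb{P}^1$ or an elliptic curve. You instead give a self-contained classical argument: factor through the normalization, compose with the Abel--Jacobi map, use rigidity of pointed morphisms of abelian varieties to see that $j(\tilde{C})$ is an abelian subvariety of $\operatorname{Jac}(\tilde{C})$ of dimension at most one, and conclude that if $g(\tilde{C}) \geq 1$ then $\tilde{C}$ is an elliptic curve. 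Your route has the advantage of only using \cite{MR2393263} once (and indeed only the weak consequence that a general-type surface is not dominated by an abelian variety, which also follows from the classical Kodaira-dimension subadditivity for images), at the cost of a slightly longer argument; the paper's route is shorter but leans on the structure theorem twice. Both are complete proofs.
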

\begin{proof}
Suppose by contradiction that $f$ is surjective. Then by \cite[Theorem 1.1]{MR2393263} the surface $S$ admits a finite étale cover which is a product of projective spaces and an abelian variety: since Kodaira dimension is invariant under finite étale covers, this contradicts the fact that the Kodaira dimension of $S$ is 2, because any such product has non-positive Kodaira dimension. So the image of $f$ can only be a single point or a curve, which is then automatically irreducible since $A$ is. Suppose that the image of $f$, call it $Z$, is a curve, and let $\tilde{Z}$ be its normalization. By the universal property of normalization, $f$ 
induces a (dominant, hence surjective) map from $A$ to $\tilde{Z}$; applying \cite[Theorem 1.1]{MR2393263} again we obtain that $\tilde{Z}$ is covered by either $\mathbb{P}^1$ or an elliptic curve, which proves the statement about the genus.
\end{proof}

\subsubsection{Proof of Theorem \ref{thm:AutoY}}
We already noticed that $A$ injects into $\operatorname{Aut}(Y)$.
For the other inclusion let $\varphi$ be an automorphism of $Y$. We prove first that $\grf$ preserve the fibers of the map $\pi:Y\lra X$. 
For each $x \in X$, let $Y_x$ be the fiber of $\pi$ over $x$ and let
\[
\varphi_x : Y_x \hookrightarrow Y \xrightarrow{\varphi} Y \xrightarrow{\pi} X.
\]
Suppose that for general $x$ the image of $Y_x$ is not reduced to a single point: then Lemma \ref{prop:ImagesOfAbelianVarieties} implies 
that generically the image of $\varphi_x$ is a (possibly singular) curve of genus at most 1. By \cite[Proposition VII.2.1]{MR2030225},
a surface of Kodaira dimension 2 admits no algebraic system (of positive dimension) of effective divisors whose general member is a 
(possibly singular) rational or elliptic curve. By Lemma \ref{lemma:GeneralType} we know that $X$ is a surface of Kodaira dimension 2, so it follows that $\varphi_x$ is 
constant for all $x \in X$. In particular, 
\[
Y \xrightarrow{\varphi} Y \to A\backslash Y = X
\]
descends to a map $\varphi_X : X \to X$, which is easily seen to be biregular (its inverse being $(\varphi^{-1})_X$), and hence an automorphism. 
It follows from Lemma \ref{lemma:TrivialAutomorphisms} that $\varphi_X$ is the identity, which implies that the equality $\varphi (Y_x)=Y_x$ 
holds for all $x \in X$. Thus we see that for every $x \in X$ the automorphism $\varphi$ of $Y$ induces an automorphism $\varphi |_{Y_x}$ of $Y_x$. 

\medskip

Thus, locally in the analytic topology, the automomorphism $\varphi$ can be described as follows. 
For each $x\in X$ we can choose an open connected neighborhood $U \subset X$ of $x$ such that 
$V=\pi^{-1}(U)$ can be identified with $U\times A$ {(as an $A$-torsor)} and $\grf(u,a)=(u,\phi(u,a))$. 
Let $r:U\lra A$ be defined by $r(u)=\phi(u,0)-0$. Then 
$a\mapsto \phi(u,a)-r(u)$ is an automorphism of $A$ as an algebraic group, 
and since $\Aut_0(A)$ is finite it must be equal to an automorphism $\phi$ independent of $u$.
Hence {$\varphi(u,a)=(u,\phi(a)+r(u))$} and $\psi=\phi-id_A$ is an endomorphism of $A$ as an algebraic group. 
{Furthermore, since any two identifications of a fiber of $Y \to X$ with the trivial $A$-torsor differ only by a translation, 
we see that the endomorphism $\psi$ thus obtained is independent of our choice of $U$ and of the local trivialization $\pi^{-1}(U) \cong U \times A$.
}

\medskip

We now prove the theorem by induction on $h$, the number of simple factors of $A$. 
Assume first that $h=1$, so that $A$ is simple. For $x\in X$ we define 
$$n(x) = \det \left( (1-\varphi|_{Y_x})_* \bigm\vert H_1(Y_x,\mathbb{Q}) \right);$$
it is a continuous function on $X$. 
Since $X$ is connected and $\mathbb{Z}$ is discrete, it follows that $n(x)$ is actually constant: 
let $n$ be the common value of the various $n(x)$. We show that $n=0$.
Suppose by contradiction that $n > 0$. Let $\tilde X=Y^\grf$ and let $\tilde \pi$ be the restriction of $\pi$ to $\tilde X$. 
We prove that $\tilde \pi$ is an $n$-to-1 covering of $X$. The fact that it is $n$-to-1 follows from Lemma \ref{lemma:DeterminantOnCohomology}.
The claim that it is a covering can be checked locally using the analytic topology: using the local description above we obtain 
$V^\grf=\{(u,a): \psi(a)=r(u)\}$, which is a covering of $U$.

If at least one of the connected components of $\tilde{X}$ is the trivial cover of $X$, then this gives a section of the projection map $\pi : Y \to X$, 
contradicting Lemma \ref{lemma:YNonTrivial}. 
Otherwise, take a connected subcover of $\tilde{X}$: this is a connected $m$-to-1 cover of $X$ for some $m \leq n$ which is smaller than $p$ by our assumption 
(*) on $p$. This contradicts the fact that $\#\pi_1(X)=p$. 

It follows that $n(x)=n=0$ for all $x$, hence by Lemma \ref{lemma:DeterminantOnCohomology} $\varphi|_{Y_x}$ is translation by a point $a(x) \in A$ 
(recall that $Y_x$ is naturally a torsor under $A$, so it makes sense to identify translations of $Y_x$ with elements of $A$). Now $x \mapsto a(x)$ 
gives a map $X \to A$, which is necessarily constant by Lemma \ref{lemma:MapsToA}, hence $\varphi$ is globally a translation by a point of $A$. 

\medskip

We now prove the inductive step. Let $h>1$. 
Since $\grf$ preserves the fibers $Y_x$, composing with a translation by an element of $A$ we can assume that there exists 
$y_0 \in Y$ such that $\grf(y_0)=y_0$. We want to prove that in this case $\grf$ is the identity.
\begin{comment}
Let $A'=A_1\backslash A$, and let $P'$ be the image of the point $P$ in $A'$ (see Definition \ref{def:Y}). If we define
$A_i'={(A_i\cap A_1)}\backslash A_i$, then $A_2',\dots, A_h'$ are the abelian factors of $A'$ and 
$\Sigma'=(A_1\cap\Sigma)\backslash \Sigma$ is the kernel of the sum $\grs':A_2'\times \dots\times A_h'\lra A'$. 

{\color{red} Ho delle remore di natura formale; per esempio, strettamente parlando non ha senso intersecare $\Sigma$ con $A_1$, perché $\Sigma$ vive in $A_1 \times \cdots \times A_h$. Inoltre, per come capisco io l'operazione $\Sigma/(\Sigma \cap A_1)$, non penso venga il risultato giusto. Supponiamo che $A$ sia data da due curve ellittiche incollate lungo un sottogruppo ciclico (formalmente: siano $E_1, E_2$ due curve ellittiche (non isogene) e siano $P_1,P_2$ punti di $m$-torsione rispettivamente su $E_1, E_2$. Allora posso considerare il sottogruppo $H$ di $E_1 \times E_2$ dato da $\langle (P_1,P_2) \rangle$. Sia $A=(E_1 \times E_2)/H$). I fattori semplici di $A$ sono proprio $A_1=E_1, A_2=E_2$ (ognuna delle quali si immerge naturalmente in $A$), e $\Sigma$ è $H \subset E_1 \times E_2$. Ora $\Sigma \cap A_1$ (nel senso di $\Sigma \cap (A_1 \times \{0\})$) è banale, per cui $\Sigma/(\Sigma \cap A_1) = \Sigma$. Ma chiaramente $A'$ è semplice, per cui la "somma" è un isomorfismo, e $\Sigma'$ dovrebbe venire banale. Quello che avevi in mente (credo) è di prendere l'intersezione "in $A$", che penso sia la cosa giusta. Provo a scriverlo. \bigskip
\end{comment}
Let $\pi : A \to A':=A/A_1$ be the natural projection and set $A_i' := \pi(A_i)$ for $i=2,\ldots,h$. We let $P'=\pi(P)$ and write $\tilde{\pi} : A_1 \times \cdots \times A_h \to A_2' \times \cdots \times A_h'$ for the homomorphism
\[
\tilde{\pi}(a_1,\ldots,a_h) = (\pi(a_2), \ldots, \pi(a_h));
\]
finally, we set $\Sigma' := \tilde{\pi}(\Sigma)$. One then checks that the sum $\sigma' : A_2' \times \cdots \times A_h' \to A'$ is an isogeny with kernel $\Sigma'$.
%notice that it is well defined, because $A_i'$ is precisely $\pi(A_i)$. Then $\Sigma' := \tilde{\pi}(\Sigma)$ is the kernel of the sum $A_2' \times \cdots A_h' \to A'$.
%}
%Let $N'$ be the order of $\Sigma'$. Then $N'\cdot \#(A_1\cap A_i)$ divides $N'\#(A_1\cap\Sigma)=N$, 
%so the analogue of conditions (*) and (**) for 
%the abelian variety $A'$ are satisfied by $p$ and $P'$. 

Let $K = \ker(\Sigma \to \Sigma')$. For every $i=2,\ldots,h$, the intersection $A_1 \cap A_i$ embeds naturally into $K$,
so $N' \cdot \#(A_1 \cap A_i) \bigm\vert N' \cdot \#K = N$. It follows that every quotient of $A_i'=A_i/(A_1 \cap A_i)$ by a subgroup of $A_i'[N']$ is a quotient of $A_i$ by a subgroup of $A_i[N]$, so the analogue of condition (*) is satisfied by $A'$ and the prime $p$. It is immediate to check that (**) also holds for $A'$, $p$, and the point $P'$.
In particular, by induction, the automorphism group of $Y'=S\times ^G A'$ is equal to $A'$.

The projection map $S\times A\lra S\times A'$ is $G$-equivariant, so it induces a map $q:Y\lra Y'$ which we prove to be a 
categorical quotient by the action of $A_1$. Indeed let $f:Y\lra Z$ be a $A_1$-invariant map. 
It induces a $G\times A_1$-invariant map $f_1:S\times A\lra Z$ 
and therefore a map $f_2:S\times (A_1\times\dots\times A_h) \lra Z$ which is invariant by the action of {both $A_1$ and $\Sigma$ on the second factor.
Since the quotient of $A_1 \times \cdots \times A_h$ by the subgroup generated by $A_1$ and $\Sigma$ is $A'$,} the map $f_2$ induces a 
regular map {$g_2:S\times A'\lra Z$ such that $f_2=g_2\circ(id_S\times \pi')$, where $\pi' := \pi \circ \sigma$ is the 
natural map $A_1 \times \cdots \times A_h \to A'$. Since furthermore $f_2$} is $G$-invariant, {$g_2$} is also $G$-invariant, 
hence it induces a map $g:Y'\lra Z$ such that $f=g\circ q$. Moreover, as $q$ is surjective, the map $g$ is unique.

We can now prove that $\grf$ is the identity. For $a\in A$ denote by $\tau_a$ the translation by $a$ in $Y$. 
Notice that for each $a$ and for each $x\in X$ there exists $\phi_x \in \Aut_0(A)$ such that
$$ \grf\circ \tau_a \circ \grf^{-1}=\tau_{\phi_x(a)}:Y_x\lra Y_x. $$
In particular, if $a\in A_1$, then $\phi_x(a)\in A_1$. 
Being $Y'$ a categorical quotient of $Y$ by the action of $A_1$, we have that 
$\grf$ induces a map $\grf':Y'\lra Y'$, which is an automorphism since $(\grf^{-1})'$ is its inverse. Moreover, the image of $y_0$ in $Y'$ is 
fixed by $\grf'$, so $\grf'$ is equal to the identity. 

Hence $\grf(y)-y \in A_1$ for all $y\in Y$. Arguing in the same way, but using $A_2$ instead of $A_1$, we obtain $\grf(y)-y \in A_2$ for all $y$.
So $\grf(y)-y\in A_1\cap A_2$ for all $y\in Y$, and since $A_1\cap A_2$ is finite and $\grf(y_0)=y_0$ we obtain $\grf(y)=y$ for all $y$. \qed

\subsection{A hypersurface in $\mathbb{P}^3$ with automorphism group $\mathbb{Z}/p\mathbb{Z}$}\label{sect:Hypersurface}In this section we explicitly construct, for every prime $p \geq 7$, an algebraic surface in $\mathbb{P}^3$ of degree $p$ whose automorphism group is cyclic of order $p$:
\begin{theorem}\label{thm:AutoS}
Let $p \geq 7$ be a prime number, and for $\lambda \in \mathbb{C}$ let $S_\lambda$ be the algebraic surface over $\mathbb{C}$ given by the zero locus in $\mathbb{P}^3$ of the homogeneous polynomial
%\[
%f(x_1,x_2,x_3,x_4) := x_1^5 + x_2^5 + x_3^5 + x_4^5 + x_2 x_3^3 x_4 + x_1^3 x_3 x_4 + x_2^2 x_3 x_4^2 + x_1 x_3^2 x_4^2.
%\]
\[
f_\lambda(x_1,x_2,x_3,x_4) := x_1^p + x_2^p + x_3^p + x_4^p + \lambda ( x_1^2 x_2^{p-4} x_3^2 + x_1^4 x_2^{p-6} x_4^2 ).
\]
The surface $S_\lambda$ is smooth for all but finitely many $\lambda \in \mathbb{C}$; if $\lambda \neq 0$, the automorphism group of $S_\lambda$ is cyclic of order $p$, generated by $[x_1 : x_2 : x_3 : x_4] \mapsto [x_1 : \zeta_p x_2 : \zeta_p^2 x_3 : \zeta_p^3 x_4]$, where $\zeta_p$ is a primitive $p$-th root of unity.
Moreover, each nontrivial element of $\operatorname{Aut}(S_\lambda)$ acts on $S_\lambda$ without any fixed points.
\end{theorem}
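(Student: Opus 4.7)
The theorem has four parts: generic smoothness of $S_\lambda$, the fact that the explicit map $\sigma$ lies in $\operatorname{Aut}(S_\lambda)$, the reverse inclusion $\operatorname{Aut}(S_\lambda) \subseteq \langle \sigma \rangle$ for $\lambda \neq 0$, and fixed-point-freeness of nontrivial elements. The key external input is the classical Matsumura--Monsky theorem: every automorphism of a smooth hypersurface in $\mathbb{P}^3$ of degree $d \geq 4$ is induced by a projective linear transformation of $\mathbb{P}^3$. Our hypersurfaces have degree $p \geq 7$, so this applies.

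The easy parts I would handle as follows. Smoothness of $S_\lambda$ fails on the closed locus in $\mathbb{A}^1$ where $f_\lambda$ and all its partial derivatives have a common projective zero; since $S_0$ is the smooth Fermat hypersurface, this locus is a proper closed subset of $\mathbb{A}^1$, hence finite. Next, $\sigma$ preserves $f_\lambda$ because, under the grading $\deg(x_i)=i-1$, every monomial of $f_\lambda$ has total weight $\equiv 0 \pmod p$; for instance $x_1^2 x_2^{p-4} x_3^2$ has weight $(p-4)+4 = p$. The element $\sigma$ has order $p$ in $\operatorname{PGL}_4(\mathbb{C})$, and for $0 < k < p$ the values $0, k, 2k, 3k$ are pairwise distinct modulo $p$ (since $p \geq 5$), so the fixed locus of $\sigma^k$ on $\mathbb{P}^3$ reduces to the four coordinate vertices; substituting any of them into $f_\lambda$ yields $1 \neq 0$, so none of them lies on $S_\lambda$, which proves fixed-point-freeness.

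The crux of the proof is the inclusion $\operatorname{Aut}(S_\lambda) \subseteq \langle \sigma \rangle$. By Matsumura--Monsky this reduces to classifying matrices $M \in \operatorname{GL}_4(\mathbb{C})$ satisfying $f_\lambda(Mx) = c \cdot f_\lambda(x)$ for some scalar $c$. The plan is to exploit the asymmetry of the perturbation monomials: the variable $x_2$ appears with the very large exponents $p-4$ and $p-6$, and the two perturbation monomials have distinct supports $\{1,2,3\}$ and $\{1,2,4\}$. First, by tracking the coefficients of monomials of $f_\lambda(Mx)$ that contain $x_2$ with maximal exponent, one should force $M$ to preserve the coordinate hyperplane $\{x_2=0\}$ (equivalently, the second row of $M$ is supported only on the $x_2$-coordinate). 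Analogous arguments using the remaining monomials should pin down the other coordinate hyperplanes, so $M$ is a product of a permutation matrix and a diagonal matrix. The distinct supports of the two perturbation monomials then rule out any nontrivial permutation, so $M$ is diagonal; finally, matching the coefficients of $x_i^p$ and of each perturbation monomial forces the diagonal entries to be $(1, \zeta, \zeta^2, \zeta^3)$ for some $p$-th root of unity $\zeta$, up to an overall scalar, giving $M \in \langle \sigma \rangle$.

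The main obstacle is precisely the combinatorial bookkeeping sketched above: the equation $f_\lambda(Mx) = c\, f_\lambda(x)$ is highly nonlinear in the entries of $M$ once the Fermat part is expanded, so one must carefully select the order in which monomial coefficients are compared, in order to propagate the vanishing of off-diagonal entries of $M$ without circular reasoning. The very specific perturbation monomials in $f_\lambda$ are chosen precisely so that this propagation goes through; in effect, one is using the perturbation to break the large symmetry group $(\mu_p^4/\mu_p) \rtimes S_4$ of the Fermat hypersurface down to the single copy of $\mathbb{Z}/p\mathbb{Z}$ generated by $\sigma$.
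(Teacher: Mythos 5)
Your treatment of the routine parts is fine and matches the paper: generic smoothness via the smooth Fermat surface $S_0$, the reduction to linear automorphisms by Matsumura--Monsky, the observation that $\sigma$ preserves $f_\lambda$ (your weight computation with $\deg x_i = i-1$ is a clean way to see it), and fixed-point-freeness because the fixed locus of any nontrivial power of $\sigma$ in $\mathbb{P}^3$ consists of the four coordinate points, none of which lies on $S_\lambda$.

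However, the heart of the theorem --- proving that any $M \in \operatorname{GL}_4(\mathbb{C})$ with $f_\lambda \circ M = c f_\lambda$ is diagonal --- is not proved in your proposal; it is only announced as a plan (``one should force \dots'', ``analogous arguments should pin down \dots''), and you yourself flag the required bookkeeping as the main obstacle. This is a genuine gap, and moreover the naive mechanism you describe is not clearly viable: extracting, say, the coefficient of $x_2^p$ in $f_\lambda(Mx)$ only yields the weak condition $f_\lambda(Me_2)=c$, and in general the monomials of $f_\lambda(Mx)$ receive contributions from all four terms $L_i^p$ (where $L_i$ is the $i$-th coordinate of $Mx$), so cancellations prevent a direct ``maximal exponent of $x_2$'' argument from immediately forcing $M$ to preserve $\{x_2=0\}$; even for the unperturbed Fermat polynomial, showing that an invariance $\sum L_i^p = c\sum x_i^p$ forces $M$ to be monomial requires a real argument. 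The paper supplies precisely the missing mechanism by differentiating the identity $f\circ M = \alpha f$ rather than expanding it: differentiating twice gives a Hessian-valued pairing $\langle u,v\rangle = \sum_{i,j} H_{ij}u_iv_j$ whose vanishing is preserved by $M$, and a short monomial inspection (the paper's Lemma on orthogonal pairs) shows the only isotropic pairs of nonzero vectors are $(e_3,e_4)$ up to scalars and swap, so $M$ permutes $\langle e_3\rangle,\langle e_4\rangle$; differentiating $p$ times gives a symmetric $p$-linear form from which one reads off $M_{31}=M_{41}=M_{32}=M_{42}=0$, hence $M$ preserves the plane $x_3=x_4=0$ and permutes $\langle e_1\rangle,\langle e_2\rangle$ via the finite set $\{x_1^p+x_2^p=0\}$ in that plane; only then does one rule out the swaps and run the diagonal computation you describe at the end (which is correct as stated). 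Without an argument of this kind (or a genuinely worked-out alternative to it), your proposal establishes only the easy inclusions and leaves the main claim unproved.
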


We start by noticing that for $\lambda=0$ the surface $S_0$ is smooth. Since being smooth is a Zariski-open condition in the defining polynomial, this shows that $S_\lambda$ is smooth away from a proper Zariski-closed subset of $\mathbb{C}$, that is, $S_\lambda$ is smooth for all but finitely many values of $\lambda$. From now on fix a nonzero value of $\lambda$ such that $S_\lambda$ is smooth, and to simplify the notation write $S$ for $S_\lambda$ and $f(x_1,x_2,x_3,x_4)$ for $f_\lambda(x_1,x_2,x_3,x_4)$.

By \cite[Theorem 2]{MR0168559} we know that all the automorphisms of $S$ are induced by 
(linear) automorphisms of $\mathbb{P}^3$, so we only need to consider these.
Let $L : \mathbb{P}^3 \to \mathbb{P}^3$ be a linear transformation that satisfies 
$L(S)=S$. We identify $L$ to the class $[M] \in \operatorname{PGL}_4(\mathbb{C})$ of a matrix $M=(M_{ij}) \in \operatorname{GL}_4(\mathbb{C})$. Furthermore, we let $e_1,\ldots,e_4$ be the canonical basis of $\mathbb{C}^4$ and denote by $\langle e_i \rangle$ the 1-dimensional $\mathbb{C}$-vector subspace of $\mathbb{C}^4$ generated by $e_i$.
We shall show Theorem \ref{thm:AutoS} in three steps: first we shall prove that $M$ either fixes or permutes the lines generated by $e_3$ and $e_4$; then we shall show that the same statement holds for the lines generated by $e_1$ and $e_2$; finally, we shall deduce from this that $M$ needs to be a diagonal matrix, at which point a direct computation concludes the proof. {This approach is inspired by \cite{MR2129679}.}

\subsubsection{Step 1: $M$ permutes $\langle e_3 \rangle$ and $\langle e_4 \rangle $}
The condition that $L(S)=S$ translates into the polynomial equality
\begin{equation}\label{eq:Automorphism}
f \circ M(x_1,\ldots,x_4) = \alpha f(x_1,\ldots,x_4)
\end{equation}
for some $\alpha \in \mathbb{C}^\times$.
Applying $\frac{\partial^2}{\partial x_i \partial x_j}$ to the two members of this equation and setting 
\[
H_{ij}(x_1,\ldots,x_4) := \frac{\partial^2f}{\partial x_i \partial x_j}(x_1,\ldots,x_4)
\]
we find
%\[
%\frac{\partial}{\partial x_i} \sum_k \frac{\partial f}{\partial x_k} \circ M(x_1,\ldots,x_4) \cdot \frac{\partial \sum_{l} M_{kl}x_l}{\partial x_j} = \alpha \frac{\partial^2 f}{\partial x_i \partial x_j}
%\]
%
%\[
%\frac{\partial}{\partial x_i} \sum_k M_{kj} \frac{\partial f}{\partial x_k} \circ M(x_1,\ldots,x_4) = \alpha \frac{\partial^2 f}{\partial x_i \partial x_j}
%\]
%
%\[
%\sum_k M_{kj} \frac{\partial}{\partial x_i}  \left( \frac{\partial f}{\partial x_k} \circ M(x_1,\ldots,x_4) \right) = \alpha \frac{\partial^2 f}{\partial x_i \partial x_j}
%\]
%
%\[
%\sum_k M_{kj}  \sum_m \left( \frac{\partial^2 f}{\partial x_m \partial x_k} \circ M(x_1,\ldots,x_4) \right) \frac{\partial \sum_{n} M_{mn}x_n}{\partial x_i}   = \alpha \frac{\partial^2 f}{\partial x_i \partial x_j}
%\]
%
%\[
%\sum_k M_{kj}  \sum_m \left( \frac{\partial^2 f}{\partial x_m \partial x_k} \circ M(x_1,\ldots,x_4) \right) M_{mi} = \alpha \frac{\partial^2 f}{\partial x_i \partial x_j}
%\]
\[
\sum_k \sum_m  M_{kj} M_{mi} H_{mk} (M(x_1,\ldots,x_4)) = \alpha H_{ij}(x_1,\ldots,x_4).
\]
Let $u, v$ be two vectors in $\mathbb{C}^4$. Multiplying the previous identity by $u_iv_j$ and summing over $i$ and $j$ we get
%\[
%\sum_{i,j,k,m} u_i v_j M_{kj} M_{mi} H_{mk} \circ M(x_1,\ldots,x_4) = %\sum_{i,j}\alpha H_{ij}(x_1,\ldots,x_4) u_i v_j
%\]
\begin{equation}\label{eq:SecondDerivatives}
\sum_{k,m} (Mv)_k (Mu)_{m} H_{mk}( M(x_1,\ldots,x_4)) = \alpha\sum_{i,j} H_{ij}(x_1,\ldots,x_4) u_i v_j.
\end{equation}
We now define a bilinear pairing
\[
\begin{array}{cccc}
\langle \cdot, \cdot \rangle : & \mathbb{C}^4 \times \mathbb{C}^4 & \to & \mathbb{C}[x_1,\ldots,x_4] \\
& (u,v) & \mapsto & \sum_{i,j} H_{i,j}(x_1,\ldots,x_4) u_i v_j,
\end{array}
\]
so that Equation \ref{eq:SecondDerivatives} reads
\[
\langle Mu, Mv \rangle \left(  M(x_1,\ldots,x_4) \right) = \alpha \langle u,v \rangle.
\]
In particular, since $M$ is invertible we obtain:
\begin{proposition}\label{prop:Orthogonality}
Let $u, v$ be vectors in $\mathbb{C}^4$. The equalities $\langle u,v \rangle=0$ and $\langle Mu, Mv \rangle=0$ are equivalent.
\end{proposition}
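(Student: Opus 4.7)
The plan is to deduce this directly from the immediately preceding identity
\[
\langle Mu, Mv \rangle (M(x_1,\ldots,x_4)) = \alpha \langle u,v \rangle,
\]
using only the invertibility of $M$ and the nonvanishing of $\alpha$. The key point is that both sides are elements of $\mathbb{C}[x_1,\ldots,x_4]$, and the equality is an equality of polynomials, so the vanishing of one side forces the vanishing of the other.

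For the forward direction, I would assume $\langle u, v \rangle = 0$ as a polynomial in $x_1,\ldots,x_4$. Then the right-hand side is identically zero, so $\langle Mu, Mv \rangle(M(x_1,\ldots,x_4))$ vanishes as a polynomial. Now I would invoke the fact that $M \in \operatorname{GL}_4(\mathbb{C})$ is invertible: substituting $M^{-1}(y_1,\ldots,y_4)$ for $(x_1,\ldots,x_4)$ (equivalently, precomposing with the automorphism of the polynomial ring induced by $M^{-1}$), I obtain $\langle Mu, Mv \rangle(y_1,\ldots,y_4) = 0$, so $\langle Mu, Mv \rangle = 0$ as a polynomial.

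For the converse direction, I would assume $\langle Mu, Mv \rangle = 0$ identically. Then the left-hand side of the displayed identity vanishes, so $\alpha \langle u, v \rangle = 0$; since $\alpha \in \mathbb{C}^\times$ (this was noted when equation \eqref{eq:Automorphism} was introduced, as $L(S)=S$ forces the defining polynomial to be rescaled by a nonzero constant), we conclude $\langle u, v \rangle = 0$.

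There is essentially no obstacle here: the whole statement is a one-line consequence of the preceding equation plus the invertibility of $M$ and $\alpha \ne 0$. The only thing to be slightly careful about is distinguishing the polynomial ring identity from a pointwise identity, but since both sides live in $\mathbb{C}[x_1,\ldots,x_4]$ and $M$ induces an automorphism of this ring, this is automatic.
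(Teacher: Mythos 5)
Your proof is correct and is exactly the argument the paper intends: the paper derives the proposition in one line from the identity $\langle Mu, Mv \rangle(M(x_1,\ldots,x_4)) = \alpha \langle u,v \rangle$ together with the invertibility of $M$ and $\alpha \neq 0$, which is precisely what you spell out.
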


\begin{lemma}\label{lemma:Orthogonal}
Let $a, b \in \mathbb{C}^4$ be two nonzero vectors such that $\langle a,b \rangle=0$. Then there exist $\lambda, \mu \in \mathbb{C}^\times$ such that either $a=\lambda e_3, b=\mu e_4$, or $a = \lambda e_4, b = \mu e_3$ hold.
\end{lemma}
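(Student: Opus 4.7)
My approach is to expand $\langle a,b\rangle$ as an explicit polynomial in $x_1,\ldots,x_4$ and then match coefficients monomial by monomial. To that end I would first compute all ten Hessian entries $H_{ij}$ of $f$. The pure part $x_1^p+\cdots+x_4^p$ contributes only $p(p-1)x_i^{p-2}$ to $H_{ii}$, while the two correction monomials $x_1^2x_2^{p-4}x_3^2$ and $x_1^4x_2^{p-6}x_4^2$ each produce a handful of mixed monomials distributed among some of the $H_{ij}$. The crucial point is that neither correction monomial involves the product $x_3x_4$, so $H_{34}=0$; this is of course compatible with the expected answer $\{a,b\}=\{\lambda e_3,\mu e_4\}$, and the task is to verify that this is the only possibility.

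The key technical step is a bookkeeping observation: each monomial appearing with nonzero coefficient in some $H_{ij}$ appears in that $H_{ij}$ alone. The pure powers $x_i^{p-2}$ obviously belong to $H_{ii}$ only, and one checks by inspection that the $\lambda$-monomials obtained by differentiating $x_1^2x_2^{p-4}x_3^2$ and $x_1^4x_2^{p-6}x_4^2$ twice all have pairwise distinct exponent vectors. (For the boundary prime $p=7$ the exponent $p-8$ is negative and one coefficient vanishes, but every pair $\{i,j\}\neq\{3,4\}$ still retains at least one $\lambda$-monomial with nonzero coefficient.) Once this is in place, the vanishing of $\langle a,b\rangle$ as a polynomial reads off as the system
\[
a_ib_i = 0 \text{ for } i=1,2,3,4, \qquad a_ib_j + a_jb_i = 0 \text{ for every unordered pair } \{i,j\}\neq\{3,4\},
\]
while no condition whatsoever is placed on the pair $\{3,4\}$.

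The proof then closes with a very short case analysis. If $a_1\neq 0$, then $b_1=0$ and the three equations $a_1b_j=0$ for $j=2,3,4$ force $b=0$, a contradiction; so $a_1=0$, and symmetrically $b_1=0$. Repeating the argument with index $2$ yields $a_2=b_2=0$, leaving $a=(0,0,a_3,a_4)$ and $b=(0,0,b_3,b_4)$ with only $a_3b_3=a_4b_4=0$, which together with $a,b\neq 0$ immediately forces $\{a,b\}=\{\lambda e_3,\mu e_4\}$ with $\lambda,\mu\in\mathbb{C}^\times$. The main obstacle is really just the bookkeeping behind the uniqueness claim for monomials in the $H_{ij}$; once that is verified, the derived equations come for free and the rest of the argument is elementary linear algebra.
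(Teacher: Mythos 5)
Your proof is correct and follows essentially the same route as the paper: both read off the bilinear conditions $a_ib_i=0$ and $a_ib_j+a_jb_i=0$ from the vanishing of individual monomial coefficients in $\langle a,b\rangle$, using that $H_{34}$ is the only vanishing Hessian entry, and then finish with an elementary case analysis. Your packaging via the observation that each monomial occurs in exactly one $H_{ij}$ (with the $p=7$ degeneration of the coefficient $(p-6)(p-7)$ correctly noted) is just a more systematic version of the paper's monomial-by-monomial extraction.
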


\begin{proof}
%This follows from a direct computation which we now sketch. 
Write $a=(a_1,a_2,a_3,a_4)$ and $b=(b_1,b_2,b_3,b_4)$. 
By direct inspection, one checks that, for $i=1,2,3,4$, the only second derivative of $f$ involving the monomial $x_i^{p-2}$ is $H_{ii}$. This immediately implies that $a_ib_i=0$ for $i=1,\ldots,4$, and by symmetry we can assume $a_1=0$.
The coefficients of the monomials $x_1 x_2^{p-5} x_3^2$, $x_1 x_2^{p-4} x_3$ and $x_1^3 x_2^{p-6} x_4$ in $\langle a,b \rangle$ are given by
$2 \lambda (p-4) (a_2 b_1 + a_1 b_2), 4 \lambda (a_3 b_1 + a_1 b_3)$ and $8 \lambda (a_4 b_1 + a_1 b_4)$ respectively, so under our assumptions $\langle a,b \rangle=0$, $\lambda \neq 0$ and $a_1=0$ we obtain $b_1a_2=b_1a_3=b_1a_4=0$. If we had $b_1 \neq 0$, this would imply $a=(0,0,0,0)$, contradicting our assumptions, so we must have $b_1=0$ as well. The situation is now again symmetric in $a,b$, so we might assume $a_2=0$. Arguing as before (but looking at the monomials $x_1^2 x_2^{p-5} x_3$ and $x_1^4 x_2^{p-7} x_4$) one finds $a_3b_2=a_4b_2=0$, so that $b_2=0$ as well. The conclusion now follows easily from the equalities $a_3b_3=a_4b_4=0$.
\end{proof}

%{\color{blue} Per controllare i conti del lemma precedente può essere utile avere $H$ in forma esplicita:
%\[
%\left(
%\begin{array}{cccc}
% 20 x_1^3+6 x_3 x_4 x_1 & 0 & 3 x_4 x_1^2+2 x_3 x_4^2 & 3 x_3 x_1^2+2 x_3^2 x_4 \\
% 0 & 20 x_2^3+2 x_3 x_4^2 & 3 x_4 x_3^2+2 x_2 x_4^2 & x_3^3+4 x_2 x_4 x_3 \\
% 3 x_4 x_1^2+2 x_3 x_4^2 & 3 x_4 x_3^2+2 x_2 x_4^2 & 20 x_3^3+6 x_2 x_4 x_3+2 x_1 x_4^2 & x_1^3+4 x_3 x_4 x_1+3 x_2 x_3^2+2 x_2^2 x_4 \\
% 3 x_3 x_1^2+2 x_3^2 x_4 & x_3^3+4 x_2 x_4 x_3 & x_1^3+4 x_3 x_4 x_1+3 x_2 x_3^2+2 x_2^2 x_4 & 20 x_4^3+2 x_1 x_3^2+2 x_2^2 x_3 \\
%\end{array}
%\right)
%\]
%}

\begin{corollary}\label{cor:Me1e2}
One of the following holds:
\begin{itemize}
\item $M \langle e_3 \rangle = \langle e_3 \rangle$ and $M \langle e_4 \rangle =\langle e_4 \rangle$;
\item $M \langle e_3 \rangle = \langle e_4 \rangle$ and $M \langle e_4 \rangle =\langle e_3 \rangle$.
\end{itemize}
\end{corollary}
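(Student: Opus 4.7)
The plan is to deduce the corollary by a direct combination of Lemma \ref{lemma:Orthogonal} and Proposition \ref{prop:Orthogonality}. The key observation is that the pair $(e_3, e_4)$ is already orthogonal with respect to $\langle \cdot, \cdot \rangle$, which follows from inspecting the defining polynomial $f$: none of its monomials contain both $x_3$ and $x_4$, so $H_{34} = \partial^2 f / \partial x_3 \partial x_4$ is identically zero, and hence $\langle e_3, e_4 \rangle = 0$.

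Given this, I would proceed as follows. First, apply Proposition \ref{prop:Orthogonality} to the pair $(u,v) = (e_3, e_4)$: since $\langle e_3, e_4 \rangle = 0$, we obtain $\langle Me_3, Me_4 \rangle = 0$. Second, since $M$ is invertible, $Me_3$ and $Me_4$ are nonzero vectors of $\mathbb{C}^4$, so Lemma \ref{lemma:Orthogonal} applies and forces $\{Me_3, Me_4\} = \{\lambda e_3, \mu e_4\}$ for some nonzero scalars $\lambda, \mu$, where the assignment may be either $Me_3 = \lambda e_3, Me_4 = \mu e_4$ or $Me_3 = \mu e_4, Me_4 = \lambda e_3$. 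Passing to the associated lines in $\mathbb{P}^3$ yields exactly the two alternatives listed in the statement.

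There is no real obstacle here: the nontrivial content is entirely in Lemma \ref{lemma:Orthogonal}, which has already been proved. The only small point that must be checked by hand is the vanishing $H_{34}=0$, and this is immediate from the explicit form of $f_\lambda$.
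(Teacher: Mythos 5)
Your proof is correct and is essentially identical to the paper's: both observe that $\langle e_3,e_4\rangle = H_{34}=0$ (since no monomial of $f$ involves both $x_3$ and $x_4$), apply Proposition \ref{prop:Orthogonality} to get $\langle Me_3, Me_4\rangle = 0$, and conclude via Lemma \ref{lemma:Orthogonal}. Nothing further is needed.
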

\begin{proof}
Apply Proposition \ref{prop:Orthogonality} to $u=e_3$ and $v=e_4$: since $\langle e_3,  e_4\rangle=H_{34}=0$ we obtain
%%\[
%%\sum_{k,m} M_{k2} M_{m1} H_{mk} = 0
%%\]
$
\langle M e_3, M e_4 \rangle = 0
$. The claim then follows from the previous lemma.
\end{proof}

\subsubsection{Step 2: $M$ permutes $\langle e_1 \rangle$ and $\langle e_2 \rangle$}

Arguing as in the previous section, it is easily seen that if we let $A : (\mathbb{C}^4)^p \to \mathbb{C}$ denote the multilinear form
\[
A: (u_1,\ldots,u_p) \mapsto \sum_{i_1,\ldots,i_p} \frac{\partial^p f}{\partial x_{i_p} \cdots \partial x_{i_p}} 
(u_1)_{i_1} \cdots (u_p)_{i_p},
\]%\todo{mi sembra bello fatto cosi`}
where $(u_i)_j$ is the $j$-th coordinate of $u_i$,
we have $A(Mu_1,\ldots,Mu_p) = \beta A(u_1,\ldots,u_p)$ for some $\beta \in \mathbb{C}^\times$; 
notice that here we do not need to compose with $M$ on the left 
hand side, because $p$-th derivatives of $f$ are just scalars. Suppose that $M \langle e_3 \rangle = \langle e_3 \rangle$ and $M \langle e_4 \rangle= \langle e_4 \rangle$; the case $M \langle e_3 \rangle = \langle e_4 \rangle$ and $M \langle e_4 \rangle = \langle e_3 \rangle$ is completely analogous. Rescaling $M$ if necessary (which we can do, since we are only interested in its projective class) we can assume $Me_3=e_3$.
Choosing $u_1=\cdots=u_{p-1}=e_3$ and $u_p=e_1$ we have
\[
\beta A(e_3,\ldots,e_3,e_1) = \beta \frac{\partial^p f}{\partial x_3^{p-1} \partial x_1}=0,
\]
from which we deduce
\[
0 = A(Me_3,\ldots,Me_3,Me_1) = A(e_3,\ldots,e_3,Me_1)= \sum_{i_p} \frac{\partial^p f}{\partial x_3^{p-1} \partial x_{i_p}} (Me_1)_{i_p};
\]
since the only nonvanishing partial derivative of the form $\frac{\partial^p f}{\partial x_3^{p-1} \partial x_{i_p}}$ is $\frac{\partial^p f}{\partial x_3^p}$, this implies $M_{31}=0$. Similary, the choice $(e_4,\ldots,e_4,e_1)$ shows $M_{41}=0$, while the choices $(e_3,\ldots,e_3,e_2)$ and $(e_4,\ldots,e_4,e_2)$ give $M_{32}=M_{42}=0$.
It follows that $M$ sends the 2-plane $\{x_3=x_4=0\}$ to itself; in particular, $M$ induces an automorphism of the finite set of points in $\mathbb{P}^3$ defined by the equations
\[
f(x_1,x_2,x_3,x_4)=0, \quad x_3=x_4=0 \quad \Longleftrightarrow \quad x_3=x_4=0, \quad x_1^p+x_2^p=0.
\]
From this it is immediate to deduce: %it is immediate to see that this implies either $M \langle e_1\rangle = \langle e_1 \rangle$ and $M \langle e_2 \rangle = \langle e_2 \rangle$, or $M \langle e_1 \rangle = \langle e_2 \rangle$ and $M \langle e_2 \rangle = \langle e_1 \rangle$. Hence:
\begin{corollary}\label{cor:Me3e4}
One of the following holds:
\begin{itemize}
\item $M \langle e_1\rangle = \langle e_1 \rangle$ and $M \langle e_2 \rangle = \langle e_2 \rangle$;
\item $M \langle e_1 \rangle = \langle e_2 \rangle$ and $M \langle e_2 \rangle = \langle e_1 \rangle$.
\end{itemize}
\end{corollary}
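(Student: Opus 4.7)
The plan is to exploit the fact, established in the preceding paragraph, that $M$ stabilises the $2$-plane $\Pi = \{x_3 = x_4 = 0\}$ (we have $M_{31}=M_{32}=M_{41}=M_{42}=0$). Consequently the restriction $M|_\Pi$ is encoded by the upper-left $2\times 2$ block $N = \begin{pmatrix} a & b \\ c & d \end{pmatrix}$ of $M$, and the corollary is equivalent to showing that $N$ is either diagonal or anti-diagonal.

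The first step I would take is to restrict the basic automorphism identity $f \circ M = \alpha f$ of Equation \eqref{eq:Automorphism} to $\Pi$. Both of the extra terms $x_1^2 x_2^{p-4} x_3^2$ and $x_1^4 x_2^{p-6} x_4^2$ vanish on $\Pi$, so $f|_\Pi = x_1^p + x_2^p$; and because $M$ preserves $\Pi$, the left hand side specialises to $(ax_1+bx_2)^p + (cx_1+dx_2)^p$. The identity thus becomes
\[
(ax_1+bx_2)^p + (cx_1+dx_2)^p \;=\; \alpha\,(x_1^p+x_2^p)
\]
as a polynomial equality in $\mathbb{C}[x_1,x_2]$.

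Next I would expand via the binomial theorem and compare coefficients of the mixed monomials $x_1^k x_2^{p-k}$ for $1 \leq k \leq p-1$; since each $\binom{p}{k}$ is a nonzero integer, this yields the system
\[
a^k b^{p-k} + c^k d^{p-k} \;=\; 0 \qquad (1 \leq k \leq p-1).
\]
A short case analysis then closes the argument: if $b=0$, invertibility of $N$ together with the $k=1$ equation forces $c=0$, so $N$ is diagonal; by symmetry, if $a=0$ then $d=0$ and $N$ is anti-diagonal. In the remaining case $ab\neq 0$, the substitution $u=c/a$, $v=d/b$ converts the system into $u^k v^{p-k} = -1$ for all $1 \leq k \leq p-1$, and comparing consecutive $k$ forces $u=v$ with $u^p = -1$, whence $c=au$, $d=bu$ and $\det N = abu - abu = 0$, contradicting invertibility.

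I do not anticipate any serious obstacle in executing this plan. The two points deserving attention are that both non-standard monomials of $f$ really do contain $x_3$ or $x_4$ (which is what makes the restriction of $f$ to $\Pi$ collapse to the clean Fermat form $x_1^p + x_2^p$), and that the last case analysis should be arranged symmetrically in $(a,b)$ and $(c,d)$. Everything beyond that is a routine polynomial computation.
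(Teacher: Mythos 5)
Your proof is correct and follows essentially the paper's route: both arguments rest on the previously established vanishing $M_{31}=M_{41}=M_{32}=M_{42}=0$, i.e.\ that $M$ stabilises the plane $\{x_3=x_4=0\}$, and then exploit that $f$ restricted to that plane is the Fermat form $x_1^p+x_2^p$. The only difference is in the last step, which the paper phrases as ``$M$ permutes the finite set of points of $S$ on that line'' and declares immediate, whereas you make the conclusion fully explicit by comparing binomial coefficients in $(ax_1+bx_2)^p+(cx_1+dx_2)^p=\alpha(x_1^p+x_2^p)$ and ruling out the non-(anti)diagonal case via $\det N=0$ --- a complete and correct way to fill in that ``immediate'' deduction.
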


\subsubsection{Step 3: determination of $\operatorname{Aut}(S)$}
Corollaries \ref{cor:Me1e2} and \ref{cor:Me3e4}
tell us that $M$ either fixes or permutes the lines $\langle e_1 \rangle, \langle e_2 \rangle$, and that the same holds for the lines $\langle e_3 \rangle, \langle e_4 \rangle$.
%that we have either $M_1=\lambda e_1$ and $M_2=\mu e_2$, or $M_1=\lambda e_2$ and $M_2=\mu e_1$ for some $\lambda, \mu \in \mathbb{C}^\times$. 
One checks easily that if $M$ exchanges $\langle e_1 \rangle$ with $ \langle e_2 \rangle$, and/or it exchanges $\langle e_3 \rangle$ with $\langle e_4 \rangle$, then $f \circ M$ is not a scalar multiple of $f$, so that $M$ needs to be a diagonal matrix. Normalize $M$ so that $M_{11}=1$ and write $M=\operatorname{diag}(1,\mu_2,\mu_3,\mu_4)$: replacing in Equation \eqref{eq:Automorphism} and comparing the coefficients of $x_1^p$ on the two sides we find $\alpha=1$. Comparing the coefficients of $x_i^p$ for $i=2,3,4$ we then obtain $\mu_i^p=1$ for $i=2,3,4$, so that $\mu_2,\mu_3,\mu_4$ are $p$-th roots of unity. It is now immediate to check that the only automorphisms of $S$ are represented by the powers of the (order $p$) matrix
$
\begin{pmatrix}
1 \\ & \zeta_p \\ & & \zeta_p^2 \\ & & & \zeta_p^3
\end{pmatrix},
$
where $\zeta_p$ is a primitive $p$-th root of unity. The fixed points (in $\mathbb{P}^3$) for the action of this matrix (or any of its powers, with the exception of the identity) are $[1:0:0:0], [0:1:0:0], [0:0:1:0], [0:0:0:1]$, none of which lies on the hypersurface $f(x_1,x_2,x_3,x_4)=0$.
This concludes the proof of Theorem \ref{thm:AutoS}.

\bibliographystyle{alpha}
\bibliography{Biblio}
\end{document}